\def\namedlabel#1#2{\begingroup
  #2%
  \def\@currentlabel{#2}%
  \phantomsection\label{#1}\endgroup
}
\newcommand{\C}{\mathbb{C}} 
\newcommand{\F}{\mathbb{F}} 
\newcommand{\N}{\mathbb{N}} 
\newcommand{\R}{\mathbb{R}} 
\newcommand{\X}{\mathbb{X}} 
\newcommand{\Z}{\mathbb{Z}} 
\newcommand{\calI}{\mathcal{I}} 
\newcommand{\calK}{\mathcal{K}} 
\newcommand{\calL}{\mathcal{L}} 
\newcommand{\calO}{\mathcal{O}} 
\DeclareMathOperator*{\argmin}{\mathsf{argmin}}
\DeclareMathOperator{\filt}{\mathsf{Filt}}
\DeclareMathOperator{\Free}{\mathsf{Free}}
\DeclareMathOperator{\im}{\mathsf{im}}
\DeclareMathOperator*{\maxx}{\mathsf{max}}
\DeclareMathOperator*{\minn}{\mathsf{min}}
\DeclareMathOperator{\modd}{\mathsf{mod}}
\DeclareMathOperator{\suppp}{\mathsf{supp}}
\newtheorem{theorem}{Theorem}[section]
\newtheorem{proposition}[theorem]{Proposition}
\newtheorem{corollary}[theorem]{Corollary}
\theoremstyle{definition}
\newtheorem{definition}[theorem]{Definition}
\newtheorem{construction}{Construction}
\newtheorem{heuristic}{Heuristic}
\theoremstyle{remark}
\newtheorem{example}[theorem]{Example}
\newtheorem{remark}[theorem]{Remark}
\algrenewcommand\algorithmicrequire{\textbf{Input:}}
\algrenewcommand\algorithmicensure{\textbf{Output:}}
\title{Lifting Cocycles: From Heuristic to Theory}
\author{Sigurd Gaukstad}
\address{Department of Mathematical Sciences, NTNU, N-7491 Trondheim, Norway}
\email{sigurd.gaukstad@ntnu.no}
\author{Mathias Karsrud Nordal}
\address{Kavli Institute for Systems Neuroscience, NTNU, N-7491 Trondheim,
  Norway}
\email{mathias.k.nordal@ntnu.no}
\author{Marius Thaule}
\address{Department of Mathematical Sciences, NTNU, N-7491 Trondheim, Norway}
\email{marius.thaule@ntnu.no}
\date{\today}
\begin{document}

%
%
\begin{abstract}
  The circular coordinates algorithm, a key tool in topological data analysis,
  relies on a theoretically unvalidated \emph{lifting step} to convert cocycles
  from a prime field to integer coefficients. We provide a rigorous analysis of
  this procedure, establishing a criterion for its success. We also introduce a
  novel algebraic method to reduce any lifted cocycle to a cocycle with winding
  number $1$, ensuring feature correctness. These principles are extended to
  homology cycles, solidifying the theoretical foundation of this widely used
  feature extraction technique.
\end{abstract}

\subjclass[2020]{Primary 55N99; Secondary 68U05}

\maketitle

%
%
\section{Introduction}\label{sec:intro}
Topological data analysis (TDA) has provided several new methods for uncovering
the shape and connectivity of data in a robust, scale-aware manner. In
particular, TDA has provided new methods for nonlinear dimensionality reduction.
One of these methods is \emph{the circular coordinates algorithm} as proposed by
de Silva, Morozov and Vejdemo-Johansson \cite{deSilva} where they use persistent
cohomology to measure the shape of a dataset producing circular-valued
coordinates that capture the underlying topology. Their algorithm has been used
with great success in computational neuroscience (Rybakken et al.\
\cite{rybakken2019decoding} and Gardner et al.\ \cite{Hermansen-Nature}). In
general, cohomological feature extraction is an active area of research, with
new contributions appearing regularly, e.g., \cite{blumberg,
  generalized-penelty, paik2023circularcoordinatesdensityrobustanalysis,
  rybakken2019decoding, Schonsheck_2023, Perea2018, Toroidal}.

Perea \cite{Perea2020}, and Schonsheck and Schonsheck \cite{Schonsheck_2023}
extend beyond circular coordinates. Both approaches rely on representative
$2$-cocycles from persistent cohomology: the former introduced projective
coordinates, while the latter constructed spherical coordinates. Together, these
advances open the door to higher-dimensional coordinate systems derived from
cohomological features.

Several of the open problems raised by de Silva et al.\ \cite{deSilva} concern
the \emph{lifting step}, that is, the procedure that constructs an integer
cocycle from a cocycle with coefficients in a prime field. This step is used in
all subsequent work, yet none of which addresses the lifting procedure beyond
what is done in \cite{deSilva}. This step has been shown empirically to be
effective, but its theoretical foundation remain largely unexplored. In
particular, the conditions under which the heuristic succeeds, as well as the
choice of integer representative for a given cocycle, have not been
systematically studied. Another related question posed in \cite{deSilva} is how
to compute generators for cohomology classes, a task that is fundamental for
extracting interpretable features but for which no general algorithmic framework
has been proposed.

In this work, we close these gaps by establishing a rigorous condition that
guarantees the lifting heuristic successfully produces a valid integer cocycle.
We introduce a novel algebraic method to determine the winding number of an
integer cocycle and systematically reduce it to a cocycle with winding number
$1$, ensuring the resulting features are correct. Our method relies on also
having an integer cycle representative. We demonstrate that the principles of
our lifting procedure can be extended to homology, providing a method to lift
cycles with coefficients in a prime field to cycles with integer coefficients.

\subsection*{Organization and main results}
In \autoref{sec:circular}, we review the circular coordinates algorithm. In
particular, we highlight the lifting step. In \autoref{sec:lifting}, we
establish sufficient conditions for the lifting step to succeed for general
$n$-cocycles in \autoref{cor:bound-sufficient-condition-lifting}. Furthermore,
we show that \autoref{cor:bound-sufficient-condition-lifting} will always
produce a lift if our prime field is large enough, cf.\
\autoref{thm:we_can_always_lift}.

In \autoref{sec:finding_gen_coc} we showcase the importance of using a generator
in the circular coordinates algorithm. We provide an algorithm,
\autoref{alg:cocycle_reduction}, that produces a cocycle with winding number $1$
based on \autoref{prop:order_of_cocycle}. We also justify the use of a prime
field for computation, in \autoref{alg:cocycle_reduction}, by providing
conditions (\autoref{prop:lift_solutions}) that ensure the integer solution is
correctly recovered. The algorithm requires an integer cycle representative in
addition to an integer cocycle representative. Finally, we provide sufficient
conditions for when we can lift such cycles, cf.\ \autoref{cor:lifting_cycle}.

\subsection*{Acknowledgments}
We thank Jose A.\ Perea for helpful discussions, and we thank Andrii Bondarenko
and Danylo Radchenko for pointing out a simpler proof of
\autoref{thm:we_can_always_lift}

%
%
\section{Preliminaries}\label{sec:circular}

\subsection{Motivation}\label{subsec:motivation}
Algorithms such as \cite{blumberg, generalized-penelty,
  paik2023circularcoordinatesdensityrobustanalysis, rybakken2019decoding,
  Toroidal} are based on the circular coordinates algorithm \cite{deSilva}.
Other algorithms, such as \cite{Schonsheck_2023, Perea2018}, have a somewhat
similar pipeline.

The circular coordinates algorithm \cite{deSilva}, projective coordinates
\cite{Perea2018} and spherical coordinates \cite{Schonsheck_2023} are all
motivated by a special case of Brown's representability theorem, as stated
below.

\begin{theorem}\label{thm:brown-representability}
  For a pointed connected CW-complex $X$, natural number $m \in \N$ and abelian
  group $G$, there exists a pointed CW-complex $K(G,m)$ such that we have a
  natural bijection of sets
  \begin{equation*}
    H^m(X;G) \cong \left[X, K(G,m)\right].
  \end{equation*}
  In particular, $H^m(-;G)$ and $[-, K(G,m)]$ are naturally isomorphic as
  functors from the category of pointed connected $CW$-complexes to the category
  of sets.
\end{theorem}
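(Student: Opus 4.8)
The plan is to deduce the claim from Brown's representability theorem applied to the (reduced) cohomology functor. Recall that Brown's theorem asserts that any contravariant functor $F$ from the homotopy category of pointed connected CW-complexes to pointed sets is representable, i.e.\ naturally isomorphic to $[-, Y]$ for some pointed connected CW-complex $Y$, provided $F$ satisfies two conditions: the \emph{wedge axiom}, that the canonical map $F\bigl(\bigvee_\alpha X_\alpha\bigr) \to \prod_\alpha F(X_\alpha)$ is a bijection, and the \emph{Mayer--Vietoris axiom}, that for every CW-triad $(X; A, B)$ with $X = A \cup B$ and $A,B$ subcomplexes, the map $F(X) \to F(A) \times_{F(A \cap B)} F(B)$ is surjective. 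First I would take $F = \widetilde{H}^m(-; G)$, which is well-defined on the homotopy category by homotopy invariance of cohomology and takes values in pointed sets (indeed abelian groups, with basepoint $0$).

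The two axioms are then verified directly from the Eilenberg--Steenrod axioms. For the wedge axiom, reduced cohomology carries coproducts of pointed spaces to products of groups, so $\widetilde{H}^m\bigl(\bigvee_\alpha X_\alpha; G\bigr) \cong \prod_\alpha \widetilde{H}^m(X_\alpha; G)$; this is immediate for finite wedges and extends to arbitrary wedges by the additivity (wedge) axiom for cohomology. For the Mayer--Vietoris axiom, given classes $a \in \widetilde{H}^m(A; G)$ and $b \in \widetilde{H}^m(B; G)$ whose restrictions to $A \cap B$ agree, the relevant segment of the Mayer--Vietoris sequence shows that $(a,b)$ lies in the kernel of the difference-of-restrictions map, hence in the image of $\widetilde{H}^m(X; G) \to \widetilde{H}^m(A;G) \oplus \widetilde{H}^m(B;G)$; surjectivity onto the fibre product is exactly this lifting statement.

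With both axioms in hand, Brown's theorem produces a pointed connected CW-complex, which I would name $K(G,m)$, together with a universal class $\iota \in \widetilde{H}^m(K(G,m); G)$ such that the natural transformation $[-, K(G,m)] \to \widetilde{H}^m(-; G)$, $f \mapsto f^*\iota$, is a natural bijection; naturality is automatic since pullback of $\iota$ is functorial in the source. To confirm the notation is justified, I would evaluate the representing space on spheres, obtaining $\pi_k\bigl(K(G,m)\bigr) \cong [S^k, K(G,m)] \cong \widetilde{H}^m(S^k; G)$, which is $G$ for $k = m$ and trivial otherwise, so the representing space is indeed an Eilenberg--MacLane space. Passing from reduced to unreduced cohomology for $m \geq 1$, where the two agree on connected spaces, yields the stated form $H^m(X; G) \cong [X, K(G,m)]$.

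The main obstacle is the Mayer--Vietoris axiom, and more precisely the bookkeeping needed to exhibit a general pointed connected CW-complex as a union $X = A \cup B$ of subcomplexes to which the Mayer--Vietoris sequence applies, together with the passage from finite to arbitrary wedges and the basepoint conventions. A secondary subtlety is the degenerate case $m = 0$, where reduced and unreduced cohomology differ and the representing object is the discrete group $G$; this case is cleanest to treat separately rather than through Brown's theorem. Alternatively, one could bypass Brown's theorem by constructing $K(G,m)$ explicitly as a CW-complex with the prescribed homotopy groups and proving that $f \mapsto f^*\iota$ is bijective by induction over the skeleta of $X$ via obstruction theory; there the inductive step of extending a map across cells while controlling its effect on cohomology would be the crux.
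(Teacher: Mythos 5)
The paper does not prove this statement at all: Theorem~\ref{thm:brown-representability} is quoted as a classical result (a special case of Brown's representability theorem, as the surrounding text says) and serves purely as motivation for the circular coordinates pipeline, so there is no in-paper argument to compare yours against. Judged on its own merits, your outline is the standard textbook proof and is essentially correct: you apply general Brown representability to $\widetilde{H}^m(-;G)$, verify the wedge axiom via additivity and the Mayer--Vietoris axiom via exactness of the Mayer--Vietoris sequence at $\widetilde{H}^m(A;G)\oplus\widetilde{H}^m(B;G)$, and then identify the representing space by evaluating on spheres. Two of the points you flag are exactly the ones that need care: first, the evaluation on spheres gives a priori only a bijection of sets $\pi_k(K(G,m))\cong\widetilde{H}^m(S^k;G)$, and to conclude $\pi_m\cong G$ as groups you must check the bijection respects the group structures (this follows because both additions are induced by the pinch map $S^m\to S^m\vee S^m$ and an Eckmann--Hilton argument); second, the statement as printed uses unreduced $H^m$, which agrees with $\widetilde{H}^m$ on connected spaces only for $m\geq 1$, so the case $m=0$ genuinely requires the separate treatment you describe (or the convention $0\notin\N$). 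Neither issue is a gap in substance, and your alternative route via an explicit CW construction of $K(G,m)$ plus obstruction theory over the skeleta of $X$ is also a standard, viable proof.
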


\begin{remark}
  We note that the circular coordinates algorithm and its derivatives are
  motivated by \autoref{thm:brown-representability} with $m = 1$, $G = \Z$ and
  $K(G,m) \simeq S^1$, while \cite{Perea2018} and \cite{Schonsheck_2023} are
  using $m = 2$ and $G = \Z$ and $K(\Z, 2) \simeq \C P^{\infty}$. Higher
  Eilenberg--Mac~Lane spaces, $K(\Z, m)$, for $m > 2$ do not have as nice
  descriptions, but their restrictions to $m$-cells does, namely the $m$-sphere,
  as used in \cite{Schonsheck_2023}. Our lifting criterion will be general
  enough to cover the lifting of a general $m$-cocycle. We will see in
  \autoref{sec:lifting} that the criterion will be dependent on $m$.
\end{remark}

\subsection{The Circular Coordinates Algorithm}\label{subsec:circular-coordinates}
The circular coordinates algorithm consists of five steps:
\begin{description}[leftmargin = 2\parindent, style = multiline]
\item[\namedlabel{it:1}{(1)}] For a point cloud $\X \subseteq \R^D$ represent
  the data as a filtered simplicial complex $\filt\left(\X\right)$. A common
  choice is the \emph{\v{C}ech} complex, or more often its computationally
  efficient approximation, the \emph{Vietoris--Rips} complex.
\item[\namedlabel{it:2}{(2)}] Compute persistent cohomology and identify a
  topologically significant cohomology class $[\alpha^{p}] \in
  H^1\left(\filt_{\varepsilon}(\X); \F_{p}\right)$ for some appropriate scale
  $\varepsilon$.
\item[\namedlabel{it:3}{(3)}] In order to relate a cohomology class to a
  circular valued function, we require an integer cocycle. Thus, we lift
  $[\alpha^{p}]$ to an integer cohomology class $[\alpha]$, i.e., $[\alpha] \in
  H^1\left(\filt_{\varepsilon}(\X); \Z\right)$. This is the \emph{lifting step}.
\item[\namedlabel{it:4}{(4)}] Replace $\alpha$ with an improved real cocycle
  $\tilde{\alpha}$ cohomologous to $\alpha$.
\item[\namedlabel{it:5}{(5)}] Using $\tilde{\alpha}$ we construct a circular
  valued function
  \begin{equation*}
    \vartheta_{X}\left(\tilde{\alpha}\right)\colon \filt_{\varepsilon}(\X) \to S^1,
  \end{equation*}
  that reveals $[\tilde{\alpha}]$.
\end{description}

Both \ref{it:1} and \ref{it:2} are rather self-explanatory. Let $X \coloneq
\filt_{\varepsilon}(\X)$, and assume we have some $[\alpha^p] \in H^1(X; \F_p)$
of interest. We need to lift $[\alpha^p]$ to a class $[\alpha] \in H^1(X; \Z)$
which we can do using the following theorem.

\begin{theorem}[Bockstein Long Exact Sequence]\label{thm:bockstein-les}
  Let $X$ be a CW complex. For a short exact sequence of abelian groups
  \begin{equation*} 
    \begin{tikzcd}
      {0} & {G} & {H} & {K} & {0,}
      \arrow[from=1-1, to=1-2]
      \arrow[from=1-2, to=1-3]
      \arrow[from=1-3, to=1-4]
      \arrow[from=1-4, to=1-5]
    \end{tikzcd}
  \end{equation*}
  there exists a long exact sequence in cohomology
  \begin{equation*}
    \begin{tikzcd}[column sep = scriptsize]
      {\cdots} &
      {H^q(X;G)} &
      {H^q(X;H)} &
      {H^q(X;K)} &
      {H^{q+1}(X;G)} &
      {\cdots}
      \arrow[from=1-1, to=1-2]
      \arrow[from=1-2, to=1-3]
      \arrow[from=1-3, to=1-4]
      \arrow["\beta", from=1-4, to=1-5]
      \arrow[from=1-5, to=1-6]
    \end{tikzcd}
  \end{equation*}
  The connecting homomorphism $\beta$ is called a \textbf{Bockstein homomorphism}.
\end{theorem}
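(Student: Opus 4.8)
The plan is to realize the asserted long exact sequence as the standard consequence of a short exact sequence of cochain complexes, with the Bockstein $\beta$ appearing as the induced connecting homomorphism. First I would fix a cochain model for $H^q(X;-)$: since $X$ is a CW complex, I would compute cohomology from the cellular cochain complexes $\mathrm{Hom}\bigl(C_\bullet(X),-\bigr)$, using that each chain group $C_q(X) \cong H_q(X^q,X^{q-1};\Z)$ is free abelian on the $q$-cells of $X$. (The singular chain complex, whose chain groups are likewise free abelian, would serve equally well and would even dispense with the CW hypothesis.)

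Next I would apply the covariant functor $\mathrm{Hom}\bigl(C_q(X),-\bigr)$ to the given short exact sequence $0 \to G \to H \to K \to 0$. Because $C_q(X)$ is free, hence projective, this functor is \emph{exact}, so for each $q$ it returns a short exact sequence
\[
  0 \to \mathrm{Hom}\bigl(C_q(X),G\bigr) \to \mathrm{Hom}\bigl(C_q(X),H\bigr) \to \mathrm{Hom}\bigl(C_q(X),K\bigr) \to 0.
\]
Since the coboundary operators are induced by the fixed boundary maps of $C_\bullet(X)$ and therefore commute with the coefficient-induced maps, these degreewise sequences assemble into a short exact sequence of cochain complexes
\[
  0 \to \mathrm{Hom}\bigl(C_\bullet(X),G\bigr) \to \mathrm{Hom}\bigl(C_\bullet(X),H\bigr) \to \mathrm{Hom}\bigl(C_\bullet(X),K\bigr) \to 0.
\]

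Finally I would invoke the zig-zag lemma (equivalently, the snake lemma applied degreewise): a short exact sequence of cochain complexes yields a long exact sequence in cohomology with a degree-raising connecting homomorphism. Identifying the cohomology of the three complexes above with $H^q(X;G)$, $H^q(X;H)$ and $H^q(X;K)$, and naming the connecting map $\beta$, produces precisely the stated sequence. The only step that is more than a routine diagram chase is the exactness of $\mathrm{Hom}\bigl(C_q(X),-\bigr)$: the covariant $\mathrm{Hom}$ is in general only left exact, so surjectivity on the right genuinely requires $C_q(X)$ to be projective, which is exactly where freeness of the cellular (or singular) chains enters. Everything downstream is the formal machinery of the zig-zag lemma, which I would either cite or reconstruct via the snake lemma.
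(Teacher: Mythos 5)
Your proposal is correct, and it is the standard argument: apply $\mathrm{Hom}\bigl(C_\bullet(X),-\bigr)$ to the coefficient sequence, use freeness (hence projectivity) of the chain groups to get a short exact sequence of cochain complexes, and invoke the zig-zag lemma to produce the connecting homomorphism $\beta$. Note that the paper itself gives no proof of this theorem --- it is stated as a classical result (the Bockstein long exact sequence) and used as a black box --- so there is no alternative argument in the paper to compare against; your reconstruction is exactly the textbook proof, and your observation that singular chains would remove the CW hypothesis, and that exactness of the covariant $\mathrm{Hom}$ is the only point where freeness is genuinely needed, are both accurate.
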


If there is no $p$-torsion in $H^2(X; \Z)$, \autoref{thm:bockstein-les} ensures
that a lift always exists. Moreover, the following proposition,
\autoref{prop:constructing-integer-cocycle}, says that we can find such a lift
by solving a Diophantine linear system. Let us first state the naive lifting
heuristic.

\begin{heuristic}\label{heu:lifting-to-integer}
  Let $X$ be a finite simplicial complex and $p$ an odd prime. Suppose $\alpha^p
  \in C^m\left(X; \F_{p}\right)$ is an $m$-cocycle. That is, $\alpha^p$ is of
  the form
  \begin{equation*}
    \alpha^p = \sum_{\sigma \in X^{(m)}} \alpha_\sigma^p\sigma^* \quad \text{with}
    \quad \alpha_\sigma^p \in \F_{p} \quad \text{for all} \quad \sigma \in X^{(m)}.
  \end{equation*}

  We construct an integer cochain $\alpha$ by choosing each coefficient
  $\alpha_\sigma$ from the range
  \begin{equation*}
    \left\{-\frac{p-1}{2}, \ldots, -1, 0, 1, \ldots, \frac{p-1}{2}\right\},
  \end{equation*}
  such that $\alpha_\sigma \equiv \alpha_\sigma^p \modd p$. In particular,
  $\alpha$ is given by
  \begin{equation*}
    \alpha = \sum_{\sigma \in X^{(m)}} \alpha_\sigma\sigma^*,
  \end{equation*}
  with
  \begin{equation*}
    \alpha_\sigma =
    \begin{cases}
     \alpha_\sigma^p & \alpha_\sigma^p \leq \frac{p - 1}{2} \\[.25cm]
      \alpha_\sigma^p - p & \alpha_\sigma^p > \frac{p - 1}{2}.
    \end{cases}
  \end{equation*}
\end{heuristic}

\begin{proposition}\label{prop:constructing-integer-cocycle}
  Let $X$ be a finite simplicial complex and $p$ an odd prime. Suppose
  $\alpha^{p} \in C^1(X; \F_{p})$ is a cocycle and that there is no $p$-torsion
  in $H^2(X; \Z)$. Then there exists an algorithm for finding an integer cocycle
  $\alpha \in C^1(X;\Z)$ that reduces to $\alpha^{p}$ modulo $p$.
\end{proposition}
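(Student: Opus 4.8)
The plan is to realize the lifting through the integral Bockstein associated to the short exact sequence of coefficient groups
\begin{equation*}
  0 \to \Z \xrightarrow{\ \cdot p\ } \Z \to \F_{p} \to 0,
\end{equation*}
and to convert the resulting abstract obstruction into an explicit integer linear system. First I would start from the naive lift of Heuristic~\ref{heu:lifting-to-integer}: choose the representative $\alpha \in C^1(X;\Z)$ whose coefficients lie in $\{-(p-1)/2,\dots,(p-1)/2\}$ and reduce to $\alpha^p$ modulo $p$. Since reduction of cochains commutes with the coboundary $\delta$ and $\alpha^p$ is a cocycle, the integer cochain $\delta\alpha \in C^2(X;\Z)$ satisfies $\delta\alpha \equiv 0 \pmod{p}$, so that $\delta\alpha = p\gamma$ for a unique $\gamma \in C^2(X;\Z)$. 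Thus the naive lift fails to be a cocycle exactly by the cochain $\gamma$, and the whole task is to correct $\alpha$ so as to annihilate $\gamma$ while preserving the reduction modulo $p$.

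The next step is to identify $\gamma$ with the Bockstein. Applying $\delta$ to $\delta\alpha = p\gamma$ gives $p\,\delta\gamma = 0$, and since $C^3(X;\Z)$ is free abelian this forces $\delta\gamma = 0$; hence $\gamma$ is an integral $2$-cocycle whose class $[\gamma] \in H^2(X;\Z)$ is, by construction, the image of $[\alpha^p]$ under the Bockstein homomorphism $\beta$ of \autoref{thm:bockstein-les}. Exactness at the first copy of $H^2(X;\Z)$ identifies the image of $\beta$ with the kernel of multiplication by $p$, that is, with the $p$-torsion subgroup of $H^2(X;\Z)$. By hypothesis this subgroup is trivial, so $[\gamma] = 0$; equivalently $\gamma$ is an integral coboundary, $\gamma = \delta\eta$ for some $\eta \in C^1(X;\Z)$. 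Setting
\begin{equation*}
  \alpha' \coloneq \alpha - p\eta
\end{equation*}
then yields $\delta\alpha' = p\gamma - p\,\delta\eta = 0$, while $\alpha' \equiv \alpha \equiv \alpha^p \pmod{p}$, so $\alpha'$ is the desired integer cocycle.

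To make this an algorithm I would (i) build $\alpha$ and compute $\gamma = \tfrac{1}{p}\,\delta\alpha$ by ordinary integer arithmetic on the coboundary matrix; and (ii) solve the Diophantine linear system $\delta\eta = \gamma$ for $\eta \in C^1(X;\Z)$. The latter is a finite system of integer linear equations, solvable by reduction to Hermite or Smith normal form; the vanishing of $[\gamma]$ guarantees that an integral solution exists, and any such solution produces a valid $\alpha' = \alpha - p\eta$.

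The main obstacle is the gap between lifting a cohomology \emph{class} and producing an actual integer \emph{cocycle} that reduces to $\alpha^p$ on the nose. The Bockstein only tells us that $[\alpha^p]$ lifts as a class; the work lies in promoting this to a cochain-level statement, which is precisely what the identity $\delta\alpha = p\gamma$ together with the integral solvability of $\delta\eta = \gamma$ accomplish. I expect the two points requiring care to be the verification that $[\gamma]$ genuinely represents $\beta([\alpha^p])$, so that the no-$p$-torsion hypothesis applies, and the assertion that $\delta\eta = \gamma$ admits an \emph{integer} rather than merely rational solution, which holds because $\gamma$ is an integral coboundary and not only a rational one.
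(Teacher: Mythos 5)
Your proof is correct and takes essentially the same route as the paper: form the naive lift $\alpha$, write $\delta\alpha = p\gamma$, show the obstruction class $[\gamma] \in H^2(X;\Z)$ is $p$-torsion and hence vanishes, then correct to $\alpha - p\eta$ where $\delta\eta = \gamma$ is solved over $\Z$ via Smith normal form. The only cosmetic difference is that you identify $[\gamma]$ with the Bockstein image $\beta([\alpha^p])$ and invoke exactness of the long exact sequence, whereas the paper observes directly that $p[\gamma] = [\delta\alpha] = 0$; the underlying argument is the same.
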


\begin{proof}
  We construct $\alpha$ by \autoref{heu:lifting-to-integer}. By construction,
  $\alpha$ reduces to $\alpha^{p}$ modulo $p$. Consequently, if
  $\delta_{1}\alpha = 0$ we are done.

  Suppose that $\delta_{1}\alpha \neq 0$. Then $\delta_{1} \alpha \equiv 0
  \modd p$. In particular, $\delta_{1} \alpha = p\eta$, for some $\eta \in
  C^2\left(X; \Z\right)$. It follows that
  \begin{equation*}
    p\delta_{2}\eta = \delta_{2}p \eta = \delta_{2} \delta_{1}\alpha = 0.
  \end{equation*}
  Hence, $\eta$ defines a cohomology class $\left[\eta\right] \in
  H^2\left(X;\Z\right)$ with $p \left[\eta\right] = \left[p\eta\right] = 0$.
  Additionally, $\eta$ is a coboundary since there is no $p$-torsion. Therefore,
  there exists some $1$-cochain $\xi$ such that $\delta_{1}\xi = \eta$. Using
  $\xi$, we can modify $\alpha$ as follows
  \begin{equation*}
    \delta_{1}\left(\alpha-p \xi\right) = \delta_{1}\alpha - p \delta_{1}\xi
    = p\eta - p\eta = 0.
  \end{equation*}
  Moreover, $\alpha - p\xi \equiv \alpha \equiv \alpha^{p} \modd p$. Indeed,
  $\alpha - p \xi$ is an appropriate lifting of $\alpha^{p}$. Finally, note that
  $\delta_{1}\xi =\eta$ can be solved by computing the Smith normal form of
  $\delta_{1}$.
\end{proof}

\begin{remark}
  \autoref{prop:constructing-integer-cocycle} shows that the only obstructions
  to lifting $\alpha^{p}$ to an integer cocycle $\alpha$ are due to $p$-torsion
  in $H^2(X;\Z)$. Thus, if there is $p$-torsion in $H^2(X; \Z)$ we can choose a
  different prime and redo the computation. Since it suffices to pick a prime
  that does not divide the order of the torsion subgroup of $H^2(X; \Z)$, almost
  any prime will do. Moreover, this can be generalized to the case where
  $\alpha^{p}$ is an $m$-cocycle.
\end{remark}

The catch, however, is that in practice using
\autoref{prop:constructing-integer-cocycle} is very difficult if we do not
\emph{get lucky}, i.e., $\alpha$ is an integer cocycle. For large point clouds,
$\delta_{1}$ tends to be large, and $\delta_{m}$ for $m > 1$ is even larger,
making the computation of the Smith normal form practically infeasible.
Therefore, it is crucial that $\alpha$, obtained after lifting to coefficients
near zero, already represents the desired cocycle.

In the following section, \autoref{sec:lifting}, we will explore when this
heuristic works, but for now we will assume that we get an integer cocycle
$\alpha$ in the preimage of the quotient map of $\alpha^p$ and move on to
\ref{it:4}.

\begin{construction}[Circular-Valued Function from Cocyle]\label{const:1}
  Let $X$ be a pointed connected simplicial complex and $\alpha \in C^1(X;\Z)$
  be a cocycle. Then we construct a continuous function
  \begin{equation*}
    \vartheta_X(\alpha)\colon |X| \to \R/\Z \cong S^1,
  \end{equation*}
  inductively, by mapping each vertex to $0$, and each edge $ab$ around the
  circle with winding number $\alpha\left(ab\right)$, and extending linearly to
  the rest of $|X|$.
\end{construction}

Using \autoref{const:1} we can build a circular-valued function on $X$ that
reveals $[\alpha]$. However, since all vertices are mapped to the base point of
the circle, all the circular variation is captured in the edges and higher order
simplices. In particular, circular variation is not captured in the
$0$-skeleton, which is ultimately what we aim for, since the $0$-skeleton equals
the input data $\X$. We would therefore like to pick another class
representative. This is called the \emph{smoothing step} and described by the
following proposition.

\begin{proposition}[{\cite[Proposition 2]{deSilva}}]\label{prop:harmonic-ls}
  Let $\iota^\#\colon C^1(X; \Z) \to C^1\left(X; \R\right)$ be the induced map
  on cochains by the canonical inclusion $\iota\colon \Z \hookrightarrow \R$.
  Suppose $\alpha \in C^1\left(X; \Z\right)$, then the least-squares
  minimization problem
  \begin{equation*} 
    \argmin_{\tilde{\alpha} \in C^1\left(X; \R\right)} \left\{ \|\tilde{\alpha}\|^2
      \mid \exists f \in C^0\left(X; \R\right), \;\tilde{\alpha} = \iota^\#(\alpha)
      + \delta_{0} f \right\}
  \end{equation*}
  has a unique solution $\tilde{\alpha}$. Moreover, this solution is
  characterized by $\delta_{0}^*\tilde{\alpha} = 0$, where $\delta_{0}^*$ is the
  adjoint of $\delta_{0}$ with respect to the canonical inner products on
  $C^0(X;\R)$ and $C^1\left(X;\R\right)$.
\end{proposition}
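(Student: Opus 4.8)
The plan is to recognize this as the classical problem of orthogonally projecting the origin onto an affine subspace of a finite-dimensional inner product space, and then to translate the resulting orthogonality condition into the stated adjoint equation.

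First I would observe that the feasible set
\[
  A \coloneq \iota^\#(\alpha) + \im\delta_0
  = \left\{\iota^\#(\alpha) + \delta_0 f \;\middle|\; f \in C^0(X;\R)\right\}
\]
is a non-empty (take $f = 0$) affine subspace of the finite-dimensional real inner product space $C^1(X;\R)$, with underlying direction the linear subspace $V \coloneq \im\delta_0$. Minimizing $\|\tilde\alpha\|^2$ over $A$ is then exactly the problem of finding the point of $A$ of least norm, i.e.\ the point of $A$ closest to the origin.

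Since $V$ is finite-dimensional, hence closed, there is an orthogonal decomposition $C^1(X;\R) = V \oplus V^\perp$. Writing $\iota^\#(\alpha) = v + w$ with $v \in V$ and $w \in V^\perp$, and absorbing $v$ into the direction space, I would rewrite $A = w + V$. Every element of $A$ then has the form $w + v'$ with $v' \in V$, and since $w \perp v'$ the Pythagorean identity gives
\[
  \|w + v'\|^2 = \|w\|^2 + \|v'\|^2,
\]
which is minimized if and only if $v' = 0$. Hence the problem has the unique solution $\tilde\alpha = w$, the $V^\perp$-component of $\iota^\#(\alpha)$, settling both existence and uniqueness at once. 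To obtain the characterization, I would invoke the definition of the adjoint, $\langle \delta_0^*\tilde\alpha, f\rangle = \langle \tilde\alpha, \delta_0 f\rangle$ for all $f \in C^0(X;\R)$, which identifies $V^\perp = (\im\delta_0)^\perp = \ker\delta_0^*$. Since $\tilde\alpha = w \in V^\perp$, this yields $\delta_0^*\tilde\alpha = 0$; conversely any feasible point killed by $\delta_0^*$ lies in $A \cap V^\perp = \{w\}$, so the condition characterizes the minimizer.

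The argument is essentially routine, and the one point warranting care is the distinction between the cochain $\tilde\alpha$ and the witness $f$: the minimizer $\tilde\alpha$ is unique, but $f$ is not, because $\delta_0$ has nontrivial kernel (the constant $0$-cochains). Parametrizing the problem by $f$ and differentiating would instead produce the normal equations $\delta_0^*\delta_0 f = -\delta_0^*\iota^\#(\alpha)$, whose solution set is a coset of $\ker\delta_0$; this recovers the same $\tilde\alpha$ but only an equivalence class of $f$. Working directly with the direction subspace $V = \im\delta_0$, as above, sidesteps this ambiguity and delivers uniqueness of $\tilde\alpha$ cleanly.
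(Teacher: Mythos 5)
Your proof is correct: the orthogonal-decomposition argument (projecting the origin onto the affine subspace $\iota^\#(\alpha)+\im\delta_0$ and identifying $(\im\delta_0)^\perp$ with $\ker\delta_0^*$) is exactly the standard least-squares argument behind this statement, and it establishes existence, uniqueness, and the characterization $\delta_0^*\tilde\alpha=0$ in one stroke. Note that the paper does not prove this proposition itself but imports it as \cite[Proposition 2]{deSilva}; your argument matches that original proof in approach, including the worthwhile observation that $\tilde\alpha$ is unique while the witness $f$ is determined only up to $\ker\delta_0$.
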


By \autoref{prop:real-cocycle-construction} we finally produce the circular map
as announced in \ref{it:5}.

\begin{proposition}[{\cite[Proposition 4]{deSilva}}]\label{prop:real-cocycle-construction}
  Suppose $\tilde{\alpha} \in C^1\left(X; \R\right)$ is a cocycle cohomologous
  to $\iota^\#\left(\alpha\right)$ for $\alpha \in C^1(X; \Z)$. That is, there
  exists $f \in C^0\left(X; \R\right)$ such that
  \begin{equation*}
    \tilde{\alpha} = \iota^{\#}\left(\alpha\right) + \delta_{0}f.
  \end{equation*}
  Then there is a continuous function
  \begin{equation*}
    \vartheta_{X}(\tilde{\alpha})\colon |X| \to \R / \Z \cong S^1,
  \end{equation*}
  mapping each edge $ab \in |X|^{(1)}$ to an interval of signed length
  $\tilde{\alpha}\left(ab\right)$.
\end{proposition}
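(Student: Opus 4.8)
The plan is to obtain $\vartheta_X(\tilde\alpha)$ as a pointwise correction of the circular map $\vartheta_X(\alpha)$ already produced from the integer cocycle $\alpha$ in \autoref{const:1}. Write $q\colon \R \to \R/\Z$ for the quotient homomorphism, and let $\bar f\colon |X| \to \R$ be the continuous extension of the $0$-cochain $f$ by affine interpolation over each simplex; this is well defined and continuous on all of $|X|$ because it is determined by the vertex values $f(v)$, which automatically agree on shared faces. Using the abelian group structure on $\R/\Z$, I would then set
\begin{equation*}
  \vartheta_X(\tilde\alpha) \coloneq \vartheta_X(\alpha) + (q \circ \bar f),
\end{equation*}
with the sum taken pointwise.

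Continuity is then immediate: $\vartheta_X(\alpha)$ is continuous by \autoref{const:1}, $q \circ \bar f$ is a composite of continuous maps, and addition in the topological group $\R/\Z$ is continuous, so $\vartheta_X(\tilde\alpha)$ is continuous on $|X|$. To check the signed-length property, parametrise an edge $ab$ as $t \mapsto (1-t)a + tb$ for $t \in [0,1]$. On this edge $\vartheta_X(\alpha)$ restricts to $t \mapsto q\bigl(t\,\alpha(ab)\bigr)$ (the arc of winding number $\alpha(ab)$ supplied by \autoref{const:1}) and $q\circ\bar f$ restricts to $t \mapsto q\bigl((1-t)f(a) + tf(b)\bigr)$, so the sum is $t \mapsto q\bigl(L(t)\bigr)$ with $L(t) = t\,\alpha(ab) + (1-t)f(a) + tf(b)$. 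Since $L$ is a continuous real lift of the image path, the signed length of the image of $ab$ is
\begin{equation*}
  L(1) - L(0) = \bigl(\alpha(ab) + f(b)\bigr) - f(a) = \alpha(ab) + (\delta_0 f)(ab) = \tilde\alpha(ab),
\end{equation*}
using $\tilde\alpha = \iota^{\#}(\alpha) + \delta_0 f$, which is exactly the assertion.

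The conceptual point worth isolating is why the correction by $f$ is needed at all. One cannot simply imitate \autoref{const:1} for $\tilde\alpha$ by sending every vertex to $0$ and wrapping each edge by $\tilde\alpha(ab)$, since $\tilde\alpha(ab)$ is in general non-integral and such an edge would fail to close up at $b$, whose image would be forced to equal both $0$ and $q\bigl(\tilde\alpha(ab)\bigr) \neq 0$. Relocating each vertex $v$ to $q\bigl(f(v)\bigr)$ is precisely the adjustment that absorbs this fractional discrepancy: the integer cocycle $\alpha$ controls the integer winding (and, through the cocycle condition $\delta_1\alpha = 0$, the continuity over $2$-cells already secured in \autoref{const:1}), while $\delta_0 f$ supplies the non-integer correction at the endpoints. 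Consequently no new argument is needed over simplices of dimension $\geq 2$: continuity holds for the $\vartheta_X(\alpha)$ summand by \autoref{const:1} and for $q \circ \bar f$ by affine interpolation. I expect this bookkeeping—verifying global continuity of the two summands and the edgewise signed lengths of their sum—to be the main, though essentially routine, obstacle.
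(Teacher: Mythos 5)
The paper itself gives no proof of this proposition---it is quoted directly from de Silva--Morozov--Vejdemo-Johansson \cite[Proposition 4]{deSilva}---so the comparison is against that source, whose argument yours reproduces: correct $\vartheta_X(\alpha)$ from \autoref{const:1} by the mod-$\Z$ reduction of the affine extension $\bar f$, and verify edgewise that the lift $L$ satisfies $L(1)-L(0) = \alpha(ab) + (\delta_0 f)(ab) = \tilde{\alpha}(ab)$. Your proof is correct, including the two points that need care (continuity of the pointwise sum in the topological group $\R/\Z$, and that the cocycle condition on $\alpha$ already handles simplices of dimension $\geq 2$), so nothing further is needed.
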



%
%
\section{Lifting to Integer Coefficients}\label{sec:lifting}
As described in \autoref{sec:circular} lifting $\alpha^p$ to an integer cocycle
$\alpha$ is a key step in the original algorithm. Despite its importance, this
lifting step, and the heuristic employed to perform it, remains somewhat poorly
understood. To our knowledge, no prior work offers a theoretical analysis or
guarantees for this procedure. In this section, we fill this gap by developing a
precise criterion for the values of the $m$-cocycle $\alpha^p$ that guarantees
that the lifting procedure always produces an integer cocycle, and as we will
see the bounds depends on the degree $m$ of the cocycle. Moreover, we will prove
that in some sense, \autoref{heu:lifting-to-integer} always works, if we are
willing to do computations in a large enough prime field.

\subsection{Sufficient Conditions}\label{subsec:sufficient-conditions}
We establish a sufficient condition under which \autoref{heu:lifting-to-integer}
produces an integer cocycle. Importantly, beyond serving as a theoretical
guarantee, in practice this condition yield an efficient procedure for repairing
representative cocycles, thereby avoiding the need to solve the Diophantine
system of equations from \autoref{prop:constructing-integer-cocycle}. This was
formulated as a key open problem by de Silva et al.\ \cite{deSilva}. We begin by
introducing some convenient notation.

\begin{definition}\label{def:fp_norm}
  Let $\F_p$ be a finite field and let $i\colon \F_p \hookrightarrow \Z$ be the
  natural inclusion of sets. We define the \emph{absolute value} of an element
  $x \in \F_p$ as $\vert x \vert_p \coloneq\min \{i(x), p-i(x)\}$.
\end{definition}

We now state the more general lifting heuristic in terms of vectors.

\begin{heuristic}\label{heu:vector_version}
  Let $\alpha^p \in \F_p^n$ for $p$ an odd prime. We can produce $\alpha \in
  \Z^n$, a preimage of $\alpha^p$ under the projection map, as
  \begin{equation*}
    \alpha_i =
    \begin{cases}
      \alpha^p_i & \alpha^p_i \leq \frac{p - 1}{2} \\[.25cm]
      \alpha^p_i - p & \alpha^p_i > \frac{p - 1}{2}.
    \end{cases}
  \end{equation*}
\end{heuristic}

\begin{proposition}\label{prop:heu_perserves_norm}
  Let $p$ be an odd prime. Then \autoref{heu:vector_version} preserves the
  point-wise absolute value. That is, if $\alpha^p \in \F_p^n$ is lifted to
  $\alpha \in \Z^n$, using \autoref{heu:vector_version}, then for all $i \in
  \{1, 2, \dots, n\}$, we have $\vert \alpha^p_i \vert_p = \vert \alpha_i
  \vert$.
\end{proposition}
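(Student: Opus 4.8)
The plan is to fix an arbitrary index $i \in \{1, \dots, n\}$ and carry out a direct case analysis on the two branches of \autoref{heu:vector_version}, in each case computing the integer absolute value $\vert \alpha_i \vert$ and comparing it against $\vert \alpha^p_i \vert_p = \min\{i(\alpha^p_i),\, p - i(\alpha^p_i)\}$ from \autoref{def:fp_norm}. The one structural observation that makes the argument clean is that, since $p$ is odd, $(p-1)/2$ is a genuine integer, so the dichotomy $\alpha^p_i \leq (p-1)/2$ versus $\alpha^p_i > (p-1)/2$ partitions the representatives $\{0, 1, \dots, p-1\}$ of $\F_p$ into two disjoint, exhaustive halves.

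First I would treat the case $x \coloneq i(\alpha^p_i) \leq (p-1)/2$. Here the heuristic sets $\alpha_i = x \geq 0$, so $\vert \alpha_i \vert = x$. On the field side, the bound $p - x \geq (p+1)/2 > x$ shows the minimum in $\vert \alpha^p_i \vert_p$ is attained at $x$, so $\vert \alpha^p_i \vert_p = x = \vert \alpha_i \vert$.

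Next I would treat the complementary case $x > (p-1)/2$, equivalently $x \geq (p+1)/2$. Now the heuristic sets $\alpha_i = x - p$, which is negative since $x < p$, so $\vert \alpha_i \vert = p - x$. On the field side, $p - x \leq (p-1)/2 < x$ forces the minimum to be $p - x$, giving $\vert \alpha^p_i \vert_p = p - x = \vert \alpha_i \vert$. As the two cases are exhaustive, the equality holds for every $i$, which is the claim.

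I do not expect any genuine obstacle here: the statement reduces to an elementary interval comparison, and the only point requiring care is invoking the oddness of $p$ to ensure $(p-1)/2 \in \Z$ and that the two coefficient ranges neither overlap nor leave a gap. Everything else is routine arithmetic, so the proof is essentially the bookkeeping of these two cases.
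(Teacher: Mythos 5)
Your proof is correct and follows essentially the same route as the paper: a direct case split on the two branches of the heuristic, with an elementary comparison to identify which term attains the minimum in $\vert \cdot \vert_p$. If anything, your treatment of the boundary is slightly more careful than the paper's, whose second case is listed as starting at $(p-1)/2$ rather than $(p+1)/2$; your version correctly makes the two cases disjoint and exhaustive.
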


\begin{proof}
  Let $\alpha^p_i \in \F_p$. If $\alpha^p_i \in \{0,1, \dots, (p - 1)/2\}$,
  then $\alpha_i = \alpha^p_i$ and $\vert \alpha^p_i \vert_p = \vert \alpha_i
  \vert$. If
  \begin{equation*}
    \alpha^p_i \in \left\{\frac{p - 1}2, \frac{p - 1}2 + 1, \dots, p - 1\right\},
  \end{equation*}
  then $\alpha_i = \alpha^p_i - p$ and $\vert \alpha^p_i \vert_p = p -
  \alpha^p_i = \vert \alpha^p_i - p \vert = \vert \alpha_i \vert.$
\end{proof}

\begin{theorem}\label{thm:lifting_general}
  Let $p$ be an odd prime, $n \in \N$, and $\alpha^p \in \F_p^n$. Let
  $\calI$ be a finite index set, and for each $i \in \calI$ let $I_i
  \subseteq \{1,2,\dots,n\}$ be such that $\sum_{j \in I_i} \alpha^p_j = 0$.

  If there is an $r\in \F_p^*$ such that
  \begin{equation}\label{eq:lifting_general_bound}
    \vert r\alpha_j^p \vert_p \leq \minn_{i \in \calI \mid j \in I_i} \left\{
      \left\lfloor \frac{p-1}{\vert I_i |} \right\rfloor \right\} \quad
    \text{for all} \quad j \in \{1, 2, \dots, n\},
  \end{equation}
  then
  \begin{equation*}
    \sum_{j \in I_i} (r\alpha)_j = 0
  \end{equation*}
  for all $i \in \calI$, where $r\alpha$ is as in \autoref{heu:vector_version}.
  Moreover, if $r^{-1}$ is the multiplicative inverse of $r$ in $\F_p^*$, then
  $r^{-1}(r\alpha)$ maps to $\alpha^p$ under the quotient map.
\end{theorem}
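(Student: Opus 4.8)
The plan is to show that, for each fixed $i \in \calI$, the integer $S_i \coloneq \sum_{j \in I_i} (r\alpha)_j$ is simultaneously divisible by $p$ and strictly smaller than $p$ in absolute value, which forces $S_i = 0$. Throughout, $r\alpha \in \Z^n$ denotes the integer vector obtained by applying \autoref{heu:vector_version} to the scaled vector $r\alpha^p \in \F_p^n$ (pointwise multiplication performed in $\F_p$), and $(r\alpha)_j$ is its $j$th coordinate.

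First I would establish the congruence. Since \autoref{heu:vector_version} produces a preimage under the projection $\Z \to \F_p$, each coordinate satisfies $(r\alpha)_j \equiv r\alpha^p_j \pmod p$. Reducing $S_i$ modulo $p$ and invoking the hypothesis $\sum_{j \in I_i} \alpha^p_j = 0$ gives $S_i \equiv \sum_{j \in I_i} r\alpha^p_j = r \sum_{j \in I_i} \alpha^p_j = 0 \pmod p$, so $p \mid S_i$.

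Next I would bound $\vert S_i \vert$. By \autoref{prop:heu_perserves_norm} the heuristic preserves the pointwise absolute value, so $\vert (r\alpha)_j \vert = \vert r\alpha^p_j \vert_p$ for every $j$. The triangle inequality together with \eqref{eq:lifting_general_bound}---noting that $i$ is itself one of the indices over which the minimum is taken, whence $\vert r\alpha^p_j \vert_p \leq \lfloor (p-1)/\vert I_i \vert \rfloor$ for each $j \in I_i$---yields
\[
  \vert S_i \vert \leq \sum_{j \in I_i} \vert (r\alpha)_j \vert
  = \sum_{j \in I_i} \vert r\alpha^p_j \vert_p
  \leq \vert I_i \vert \left\lfloor \frac{p-1}{\vert I_i \vert} \right\rfloor.
\]
The crucial arithmetic point, and really the reason the bound is stated in this precise form, is that $\vert I_i \vert \lfloor (p-1)/\vert I_i \vert \rfloor \leq p - 1 < p$: the floor together with the factor $\vert I_i \vert$ is exactly calibrated so that the total can never reach $p$. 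Combining $p \mid S_i$ with $\vert S_i \vert < p$ forces $S_i = 0$, and since $i \in \calI$ was arbitrary this proves the first assertion.

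For the final claim, observe that $r\alpha$ reduces modulo $p$ to $r\alpha^p$; multiplying by the inverse $r^{-1} \in \F_p^*$ recovers $r^{-1} \cdot r\alpha^p = \alpha^p$ in $\F_p^n$, so $r^{-1}(r\alpha)$ maps to $\alpha^p$ under the quotient map. I expect the only delicate points to be keeping the two roles of $r\alpha$ straight---the integer lift versus its reduction mod $p$---and verifying the floor inequality cleanly; both the congruence and the triangle-inequality estimate are routine once the correct quantity $S_i$ has been isolated.
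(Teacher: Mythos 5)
Your proof is correct and follows essentially the same route as the paper's: both use \autoref{prop:heu_perserves_norm} to transfer the bound \eqref{eq:lifting_general_bound} into the estimate $\sum_{j \in I_i} \vert (r\alpha)_j \vert \leq \vert I_i \vert \lfloor (p-1)/\vert I_i \vert \rfloor < p$, combine this with the divisibility $p \mid \sum_{j \in I_i} (r\alpha)_j$ to force the sum to vanish, and dispatch the final claim about $r^{-1}(r\alpha)$ as routine. Your write-up is somewhat more explicit than the paper's (in particular, spelling out the floor inequality that the paper leaves implicit), but the underlying argument is identical.
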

\begin{proof}
  Assume $r \in \F_p^*$ satisfies \eqref{eq:lifting_general_bound}. Then for
  each $i \in \calI$ and every $j \in I_i$, we have
  \begin{equation*}
    \vert r\alpha_j^p \vert_p \leq \left\lfloor \frac{p-1}{\vert I_i |}
    \right\rfloor.
  \end{equation*}
  By \autoref{prop:heu_perserves_norm}, we obtain $\sum_{j \in I_i} \vert
  r\alpha_j \vert < p$. Moreover, since
  \begin{equation*}
    \sum_{j \in I_i} r\alpha_j = s p
  \end{equation*}
  for some $s \in \Z$, it follows that $s = 0$. Finally, by associativity,
  $r^{-1}(r\alpha)$ is clearly mapped to $\alpha$ under the quotient map.
\end{proof}

As a direct consequence of \autoref{thm:lifting_general}, we get the following
corollary.

\begin{corollary}\label{cor:bound-sufficient-condition-lifting}
  Assume that $\alpha^p \in C^m(X, \F_p)$. If there exists a scalar $r \in
  \F^*_p$ such that $r\alpha^p$ has coefficients in the range
  \begin{equation*}
    \calL_m^p \coloneq \left\{-\left\lfloor \frac{p-1}{m+2} \right\rfloor, \dots,
      -1, 0, 1, \dots, \left\lfloor \frac{p-1}{m+2} \right\rfloor \right\},
  \end{equation*}
  then we can find a cocycle that is preimage of $\alpha^p$ in $C^m(X, \Z)$
  using scaling and \autoref{heu:lifting-to-integer}.
\end{corollary}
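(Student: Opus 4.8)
The plan is to deduce the corollary from \autoref{thm:lifting_general} by recognizing the cocycle condition $\delta_m \alpha^p = 0$ as a family of zero-sum relations of the exact size $m+2$. First I would fix an enumeration of the $m$-simplices of $X$, identifying $C^m(X,\F_p)$ with $\F_p^n$ for $n = |X^{(m)}|$, so that $\alpha^p$ becomes a vector. For the index data I would take $\calI = X^{(m+1)}$, the set of $(m+1)$-simplices, and for each $\tau \in \calI$ let $I_\tau$ collect the indices of the $m$-faces of $\tau$. Since an $(m+1)$-simplex has exactly $m+2$ codimension-one faces, every $I_\tau$ satisfies $|I_\tau| = m+2$, and hence $\lfloor (p-1)/|I_\tau| \rfloor = \lfloor (p-1)/(m+2) \rfloor$. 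Consequently, the hypothesis that $r\alpha^p$ has coefficients in $\calL_m^p$ is exactly the bound \eqref{eq:lifting_general_bound}, because the minimum there ranges only over relations of size $m+2$.

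The step that needs care, and what I expect to be the main obstacle, is that the coboundary carries alternating signs: the relation attached to $\tau$ reads $\sum_{k=0}^{m+1}(-1)^k \alpha^p(d_k\tau) = 0$, where $d_k\tau$ denotes the $k$-th face, whereas \autoref{thm:lifting_general} is phrased with the unsigned sum $\sum_{j \in I_i}\alpha^p_j = 0$. I would absorb the signs by passing to the incidence-indexed vector $\gamma^p$ with entries $\gamma^p_{(\tau,k)} = (-1)^k \alpha^p(d_k\tau)$, indexed by pairs $(\tau,k)$; each incidence belongs to a unique $\tau$, so the relations $\sum_k \gamma^p_{(\tau,k)} = 0$ are genuinely unsigned and of size $m+2$. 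The two facts that make this harmless are that $|\cdot|_p$ is invariant under negation, giving $|r\gamma^p_{(\tau,k)}|_p = |r\alpha^p(d_k\tau)|_p$, and that the balanced lift of \autoref{heu:vector_version} is sign-equivariant, so that the lift $r\gamma$ satisfies $(r\gamma)_{(\tau,k)} = (-1)^k (r\alpha)(d_k\tau)$. Thus the bound \eqref{eq:lifting_general_bound} holds for $\gamma^p$ precisely when $r\alpha^p$ lands in $\calL_m^p$.

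With this setup I would apply \autoref{thm:lifting_general} to $\gamma^p$ to obtain $\sum_k (r\gamma)_{(\tau,k)} = 0$ for every $\tau$, which unwinds via the sign-equivariance above to $\sum_{k=0}^{m+1}(-1)^k (r\alpha)(d_k\tau) = (\delta_m(r\alpha))(\tau) = 0$. Since $\tau$ was arbitrary, $r\alpha$ is an integer cocycle reducing to $r\alpha^p$ modulo $p$, where $r\alpha$ is produced from $\alpha^p$ by scaling by $r$ and applying \autoref{heu:lifting-to-integer}. Finally, multiplying by any integer representative of $r^{-1} \in \F_p^*$ yields an integer cocycle that reduces to $\alpha^p$, which is the required preimage. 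Apart from the sign bookkeeping described above, the argument reduces entirely to \autoref{prop:heu_perserves_norm} and the proof of \autoref{thm:lifting_general}, so I expect no further difficulty once the incidence reindexing is in place.
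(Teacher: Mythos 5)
Your proof is correct and follows the paper's intended route: the corollary is stated there as a direct consequence of \autoref{thm:lifting_general}, instantiated exactly as you do with $\calI = X^{(m+1)}$ and each $I_\tau$ the set of $m+2$ codimension-one faces of $\tau$, so that every relation has size $m+2$ and the bound becomes $\lfloor (p-1)/(m+2)\rfloor$. Your incidence reindexing $(\tau,k)$ to absorb the alternating signs of the coboundary fills in a detail the paper leaves implicit, and it is valid since $\vert \cdot \vert_p$ is negation-invariant and the balanced lift of \autoref{heu:vector_version} is sign-equivariant (equivalently, the proof of \autoref{thm:lifting_general} goes through verbatim for signed zero-sums); this is a careful completion of the same argument rather than a different approach.
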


\begin{example}\label{ex:no_cocycle}
  Consider \autoref{fig:exmple_scaling_lifting}, with a filled in triangle $X$
  and a $1$-cochain $\alpha^p$ given by the vector $(3, 4,1)$ over the field
  $\F_p$ with $p = 7$ and with basis $(bd,ac,ab)$. We can verify that this is a
  cocycle as $3-4+1 \equiv 0 \modd p$. However, it has coefficients that do not
  lie in $\calL_1^p$, and we see that applying \autoref{heu:lifting-to-integer}
  do not yield a cocycle. However, if we let $r = 2$, we can scale $\alpha^p$,
  such that $r\alpha^p$ has coefficients in $\calL_1^7$. We can verify that
  \autoref{heu:lifting-to-integer} in this case yields an integer cocycle.
  Moreover, by scaling the lifted cocycle $\alpha$ by $4$, that is $2^{-1}$ in
  $\F_p$, the preimage of $\alpha$ is a cocycle.
  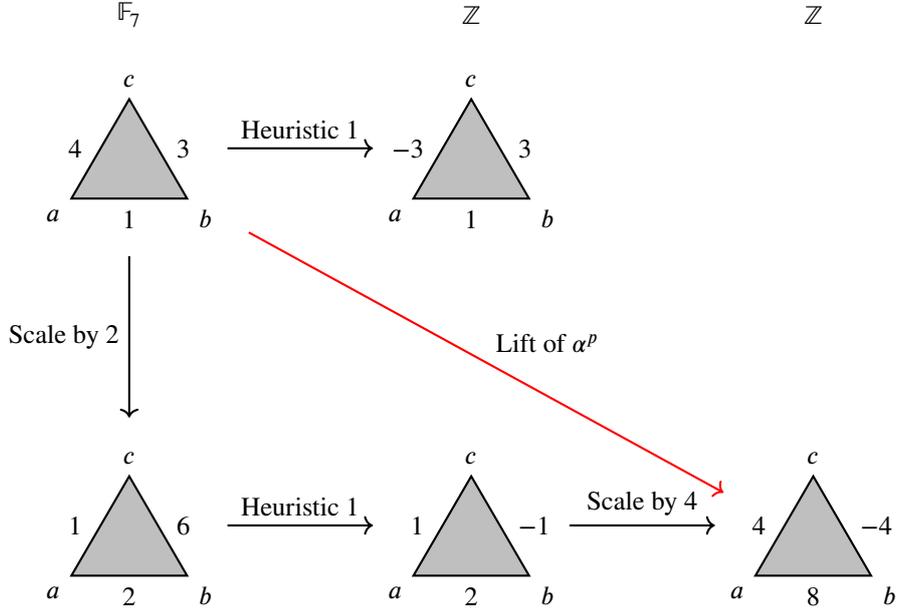
\begin{figure}[htbp]
    \centering
    \begin{tikzpicture}[thick]
      \node at (0,2){$\F_7$};
      \node at (4.5,2){$\Z$};
      \node at (9,2){$\Z$};

      \node[name = trekant1,%
      regular polygon,%
      regular polygon sides = 3,%
      draw,%
      fill = black!25,%
      minimum width = 5em] at (0,0) {};
      \node[above] at (trekant1.corner 1){$c$};
      \node[below left] at (trekant1.corner 2){$a$};
      \node[below right] at (trekant1.corner 3){$b$};
      \node[left, xshift = -0.1cm] at (trekant1.side 1){$4$};
      \node[below] at (trekant1.side 2){$1$};
      \node[right, xshift = 0.1cm] at (trekant1.side 3){$3$};

      \node[name = trekant2,%
      regular polygon,%
      regular polygon sides = 3,%
      draw,%
      fill = black!25,%
      minimum width = 5em] at (4.5,0) {};
      \node[above] at (trekant2.corner 1){$c$};
      \node[below left] at (trekant2.corner 2){$a$};
      \node[below right] at (trekant2.corner 3){$b$};
      \node[left, xshift = -0.1cm] at (trekant2.side 1){$-3$};
      \node[below] at (trekant2.side 2){$1$};
      \node[right, xshift = 0.1cm] at (trekant2.side 3){$3$};

      \node[name = trekant3,%
      regular polygon,%
      regular polygon sides = 3,%
      draw,%
      fill = black!25,%
      minimum width = 5em] at (0,-5) {};
      \node[above] at (trekant3.corner 1){$c$};
      \node[below left] at (trekant3.corner 2){$a$};
      \node[below right] at (trekant3.corner 3){$b$};
      \node[left, xshift = -0.1cm] at (trekant3.side 1){$1$};
      \node[below] at (trekant3.side 2){$2$};
      \node[right, xshift = 0.1cm] at (trekant3.side 3){$6$};

      \node[name = trekant4,%
      regular polygon,%
      regular polygon sides = 3,%
      draw,%
      fill = black!25,%
      minimum width = 5em] at (4.5,-5) {};
      \node[above] at (trekant4.corner 1){$c$};
      \node[below left] at (trekant4.corner 2){$a$};
      \node[below right] at (trekant4.corner 3){$b$};
      \node[left, xshift = -0.1cm] at (trekant4.side 1){$1$};
      \node[below] at (trekant4.side 2){$2$};
      \node[right, xshift = 0.1cm] at (trekant4.side 3){$-1$};

      \node[name = trekant5,%
      regular polygon,%
      regular polygon sides = 3,%
      draw,%
      fill = black!25,%
      minimum width = 5em] at (9,-5) {};
      \node[above] at (trekant5.corner 1){$c$};
      \node[below left] at (trekant5.corner 2){$a$};
      \node[below right] at (trekant5.corner 3){$b$};
      \node[left, xshift = -0.1cm] at (trekant5.side 1){$4$};
      \node[below] at (trekant5.side 2){$8$};
      \node[right, xshift = 0.1cm] at (trekant5.side 3){$-4$};

      \draw[shorten <= 0.9cm,%
      shorten >= 0.9cm,%
      ->] (trekant1.side 3) -- node[auto]{Heuristic 1} (trekant2.side 1);
      \draw[shorten <= 0.9cm,%
      shorten >= 0.9cm,%
      ->] (trekant3.side 3) -- node[auto]{Heuristic 1} (trekant4.side 1);
      \draw[shorten <= 0.9cm,%
      shorten >= 0.9cm,%
      ->] (trekant4.side 3) -- node[auto]{Scale by $4$} (trekant5.side 1);
      \draw[shorten <= 0.75cm,%
      shorten >= 0.75cm,%
      ->] (trekant1.side 2) -- node[auto, swap]{Scale by $2$} (trekant3.corner 1);
      \draw[shorten <= 0.9cm,%
      shorten >= 0.9cm,%
      red,%
      ->] (trekant1.corner 3) -- node[auto,text = black]{Lift of $\alpha^p$} (trekant5.side 1);
    \end{tikzpicture}
    \caption{An example where applying \autoref{heu:lifting-to-integer} does not
      yield a cocycle, but when we allow for scaling in the prime field before
      lifting, \autoref{heu:lifting-to-integer} does yield a cocycle.}
    \label{fig:exmple_scaling_lifting}
  \end{figure}
\end{example}

In many examples we find that when the persistence algorithm produce cocycle
representatives with coefficients outside the range $\calL_m^p$ they happen to
lie close to $0$ and close to $(p - 1)/2$. See
\autoref{fig:trefoil_fixing_coordinates}. Note that all of our persistence
cohomology and homology computations are done using \texttt{Ripserer},
\cite{Cufar2020}. If $\alpha$ is such a cocycle representative, then $2 \alpha$
would have coefficients close to $0$. Hence our criterion in
\autoref{cor:bound-sufficient-condition-lifting} would be fulfilled, and a lift
would be possible. By allowing for scaling of the representative with
coefficients in $\F_p$, we extend the class of liftable cocycles, both in theory
and in practice.

\begin{figure}[htbp]
  \centering
  \includegraphics[width=1\linewidth]{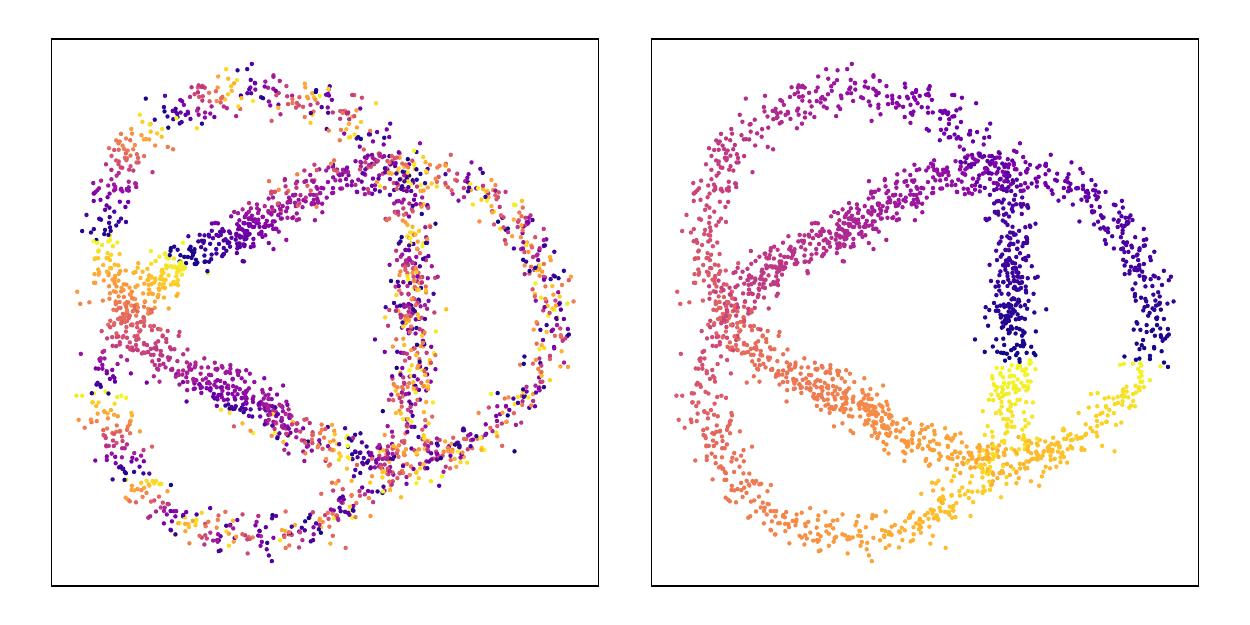}
  \caption{For $p = 47$, the cocycle representative given from \texttt{Ripserer}
    gives coordinates with coefficients in the set $\{ 1, 23, 24, 46\}$. The
    lift does not produce a cocycle, and the circular coordinates fails (left).
    By scaling the cocycle by $2$ before lifting, we successfully lift to a
    cocycle with winding number $1$, as showcased by the expected circular
    coordinates (right).}
  \label{fig:trefoil_fixing_coordinates}
\end{figure}

For a simplicial complex $X$, we can regard a cocycle $\alpha^p \in C^m(X;
\F_p)$ as an element in $\F_p^{n}$, where $n$ is the number of $m$-simplices in
$X$. We note that $n$ and $m$ are completely independent of the prime $p$. Both
in practice and theory, the choice of a prime $p > 2$, when calculating the
circular coordinates, is rather arbitrary as long as we avoid a finite number of
primes in relation to torsion. In other words, $p$ could be almost any prime. In
light of this observation we ask the question: Will the lift, as in
\autoref{cor:bound-sufficient-condition-lifting}, always work? In other words,
given any cocycle $\alpha^p \in C^m(X; \F_p)$, or rather, $\alpha^p \in \F_p^n$,
does there always exist some $r \in \F_p^*$ such that $r \alpha^p$ has
coefficients in the range $\calL_m^p$? The answer is no. For instance, there are
vectors $\alpha^{13} \in \F_{13}^4$, such that there is no scalar $r \in
\F_{13}^*$, such that the coefficients of $r\alpha^{13}$ land in the prescribed
range, $\calL_m^p$, with $m = 1$. However, by picking a higher prime, such as $p
= 31$, any $\alpha^{31} \in \F_{31}^4$ becomes liftable.

\begin{theorem}\label{thm:we_can_always_lift}
  Let $n, k \in \N$. Then there exists some prime $p > k$ such that for any
  $\alpha^p \in \F^n_p$, there exists some $r \in \F^*_p$ such that
  \begin{equation*}
    |r\alpha_i^p|_p \leq\left\lfloor \frac{p-1}{k} \right\rfloor.
  \end{equation*}
\end{theorem}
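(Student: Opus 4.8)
The plan is to read the statement as a finite, pigeonhole form of Dirichlet's theorem on simultaneous Diophantine approximation, proved by a counting argument on the torus $[0,1)^n$. The case $k = 1$ is trivial, since $\lfloor (p-1)/1 \rfloor = p-1 \geq |x|_p$ for every $x \in \F_p$, so one takes $r = 1$ and any prime; thus I assume $k \geq 2$. The decisive quantitative choice is to fix a prime $p$ with $p > k^n$, which exists because there are infinitely many primes; note this already forces $p > k$ as the theorem demands.

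Next I would lift each coordinate $\alpha_i^p$ to its representative in $\{0,1,\dots,p-1\}$, still denoted $\alpha_i$, and define
\[
  \phi\colon \{0,1,\dots,p-1\} \to [0,1)^n, \qquad \phi(r) = \left( \{ r\alpha_1/p \}, \dots, \{ r\alpha_n/p \} \right),
\]
where $\{\cdot\}$ denotes the fractional part. Partitioning $[0,1)^n$ into the $k^n$ half-open boxes of side $1/k$ and applying the pigeonhole principle, the strict inequality $p > k^n$ guarantees two distinct indices $r \neq r'$ with $\phi(r)$ and $\phi(r')$ in the same box. Setting $s \equiv r - r' \pmod p$, distinctness gives $s \in \F_p^*$, and this $s$ will serve as the scalar $r$ in the statement.

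It then remains to verify the bound. Because $\phi(r)$ and $\phi(r')$ share a box, for each $i$ the $i$-th coordinates agree to within $1/k$ on the circle $\R/\Z$; since this coordinate difference is congruent to $s\alpha_i/p \pmod 1$, the class $s\alpha_i^p \in \F_p$ satisfies $|s\alpha_i^p|_p / p < 1/k$, i.e.\ $|s\alpha_i^p|_p < p/k$. Here primality enters: since $p > k \geq 2$ we have $k \nmid p$, so $p/k \notin \Z$ and the largest integer strictly below $p/k$ is $\lfloor p/k \rfloor$; writing $p = qk + t$ with $1 \leq t \leq k-1$ one checks $\lfloor p/k \rfloor = q = \lfloor (p-1)/k \rfloor$, which yields exactly $|s\alpha_i^p|_p \leq \lfloor (p-1)/k \rfloor$ for all $i$.

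The main obstacle I anticipate is not the pigeonhole step but the bookkeeping that reconciles the open estimate $|s\alpha_i^p|_p < p/k$ coming from the box decomposition with the floor bound $\lfloor (p-1)/k \rfloor$ demanded by the theorem; this reconciliation is precisely where primality and $p > k$ are used, to force $k \nmid p$. A secondary point to handle is ensuring the two pigeonhole indices are genuinely distinct so that $s \in \F_p^*$ rather than $s = 0$; coordinates with $\alpha_i^p = 0$ pose no difficulty, as they contribute $|s\alpha_i^p|_p = 0$ and only loosen the box constraint.
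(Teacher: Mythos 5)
Your proof is correct and is essentially the paper's argument: both are Dirichlet-style pigeonhole proofs that place the $p$ (respectively $p-1$) multiples $r\alpha^p$ into $k^n$ boxes of side roughly $p/k$ and take the difference of two multiples sharing a box, your torus $[0,1)^n$ with fractional parts being just a rescaling of the paper's partition of $\F_p$ into integer intervals. The only differences are cosmetic: you include $r=0$ (so $p>k^n$ suffices where the paper asks $p-1>k^n$), and you carry out explicitly the reconciliation $\lfloor p/k\rfloor=\lfloor (p-1)/k\rfloor$ via $k\nmid p$, which the paper leaves implicit.
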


\begin{proof}
  Let
  \begin{equation*}
    \calK \coloneq \left\{-\left\lfloor \frac{p-1}{k} \right\rfloor, \dots,
      -1, 0, 1,
      \ldots, \left\lfloor \frac{p-1}{k} \right\rfloor\right\}.
  \end{equation*}
  We divide $\F^n_p$ into $k^n$ many cubes as given by the partition of $\F_p$
  as
  \begin{equation*}
    \bigg[0,\left\lfloor \frac{p}{k} \right\rfloor \bigg]
    \sqcup \left[\left\lceil \frac{p}{k} \right\rceil,\left\lfloor \frac{2p}{k} \right\rfloor\right]
    \sqcup \dots \sqcup
    \left[\left\lceil \frac{(k-1)p}{k} \right\rceil, p-1 \right].
  \end{equation*}
  To clarify, a cube is then of the form
  \begin{equation*}
    \left[\left\lceil \frac{(l_1-1)p}{k} \right\rceil,\left\lfloor
        \frac{l_1p}{k} \right\rfloor\right] \times \left[\left\lceil \frac{(l_2-1)p}{k}
      \right\rceil,\left\lfloor \frac{l_2p}{k} \right\rfloor\right] \times \dots
    \times \left[\left\lceil \frac{(l_n-1)p}{k} \right\rceil,\left\lfloor \frac{l_np}{k}
      \right\rfloor\right]  \subset \F^n_p,
  \end{equation*}
  with $l_i \in \{1, 2, \dots, k\}$ for all $i \in \{1, 2, \dots, n\}$. Let
  $\alpha^p \in \F^n_p$, then $[\alpha^p,2\alpha^p , \dots, (p-1)\alpha^p]$ is a
  list of $p-1$ many vectors. If $p-1 >k^n$, we need that two of these lie in
  the same cube defined above. In other words, there exist $r_1, r_2 \in \F_p^*$
  such that $r_1\alpha^p - r_2\alpha^p \in \calK^n$. Hence, if we pick $r
  \coloneq r_1 -r_2$, then
  \begin{equation*}
    |r\alpha_i^p|_p \leq\left\lfloor \frac{p-1}{k} \right\rfloor.\qedhere
  \end{equation*}
\end{proof}

\begin{corollary}\label{cor:we_can_always_lift_cocycles}
  Let $\alpha^p \in C^m(X; \F_p)$ be a cocycle. Define the support of $\alpha^p$
  as
  \begin{equation*}
    \suppp(\alpha^p) \coloneq \{\sigma \in X^{(m)} \mid \alpha^p (\sigma) \neq 0\},
  \end{equation*}
  and $n \coloneq \vert \suppp(\alpha^p) \vert$. If $p$ satisfies $p-1 >
  (m+2)^n$ there exists a lift $\alpha \in C^m(X; \Z)$ that maps to $\alpha^p$
  under the projection map.
\end{corollary}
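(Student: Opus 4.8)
The plan is to deduce this as an application of \autoref{thm:we_can_always_lift} and \autoref{cor:bound-sufficient-condition-lifting}, specialized to $k = m+2$. First I would dispose of the degenerate case $n = 0$: then $\alpha^p$ is the zero cocycle and the zero cochain in $C^m(X;\Z)$ is a lift. So from now on assume $n \geq 1$.

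The crucial observation is that multiplication by a unit $r \in \F_p^*$ cannot turn a vanishing coefficient into a nonzero one. Hence every coefficient of $\alpha^p$ indexed outside $\suppp(\alpha^p)$ stays $0$ under scaling and so lies in $\calL_m^p$ automatically, irrespective of $r$. It therefore suffices to control the $n$ coefficients supported on $\suppp(\alpha^p)$. I would collect these into a vector $\bar\alpha^p \in \F_p^n$ and invoke the pigeonhole argument from the proof of \autoref{thm:we_can_always_lift} with $k = m+2$. Since $p - 1 > (m+2)^n = k^n$, partitioning $\F_p^n$ into the $k^n$ cubes used there forces two of the $p-1$ scalar multiples $\bar\alpha^p, 2\bar\alpha^p, \dots, (p-1)\bar\alpha^p$ to share a cube. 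If these are $r_1 \bar\alpha^p$ and $r_2 \bar\alpha^p$ with $r_1 \neq r_2$, then $r \coloneq r_1 - r_2 \in \F_p^*$ satisfies
\[
  |r\alpha_j^p|_p \leq \left\lfloor \frac{p-1}{m+2} \right\rfloor
  \qquad \text{for all } j \in \suppp(\alpha^p).
\]

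Combining the two parts, all coefficients of $r\alpha^p$ lie in $\calL_m^p$: those on the support by the displayed bound, and the rest because they are $0$. \autoref{cor:bound-sufficient-condition-lifting} then yields an integer cocycle $\alpha \in C^m(X;\Z)$ mapping to $\alpha^p$, obtained by scaling by $r$, applying \autoref{heu:lifting-to-integer}, and scaling back by $r^{-1}$.

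I do not anticipate a real obstacle, as the statement is essentially an assembly of the two preceding results. The single point demanding care is the recognition that the pigeonhole count is governed by $n = |\suppp(\alpha^p)|$ rather than the full number of $m$-simplices of $X$: since scaling fixes the zero coordinates, they contribute no constraint and may be omitted from the cube-counting, which is exactly what lets the hypothesis read $p - 1 > (m+2)^n$ with $n$ the support size.
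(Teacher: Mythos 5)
Your proof is correct and takes essentially the same route as the paper's: apply the pigeonhole argument of \autoref{thm:we_can_always_lift} with $k = m+2$ to produce a scalar $r \in \F_p^*$, then conclude via \autoref{thm:lifting_general} (your appeal to \autoref{cor:bound-sufficient-condition-lifting} is the same thing, as that corollary is a direct restatement of the theorem). The only difference is that you make explicit what the paper leaves implicit --- that scaling by a unit fixes zero coordinates, so the pigeonhole count runs over the $n$ support coordinates rather than all of $X^{(m)}$ --- which is a clarification of the same argument, not a different one.
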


\begin{proof}
  By \autoref{thm:we_can_always_lift} there is a scalar $r \in \F^*_p$ such that
  \begin{equation*}
    \vert r \alpha_i^p \vert_p \leq \left\lfloor \frac{p-1}{m+2} \right\rfloor.
  \end{equation*}
  The claim then follows by \autoref{thm:lifting_general}.
\end{proof}

\begin{figure}[htbp]
  \centering
  \includegraphics[width=0.8\linewidth]{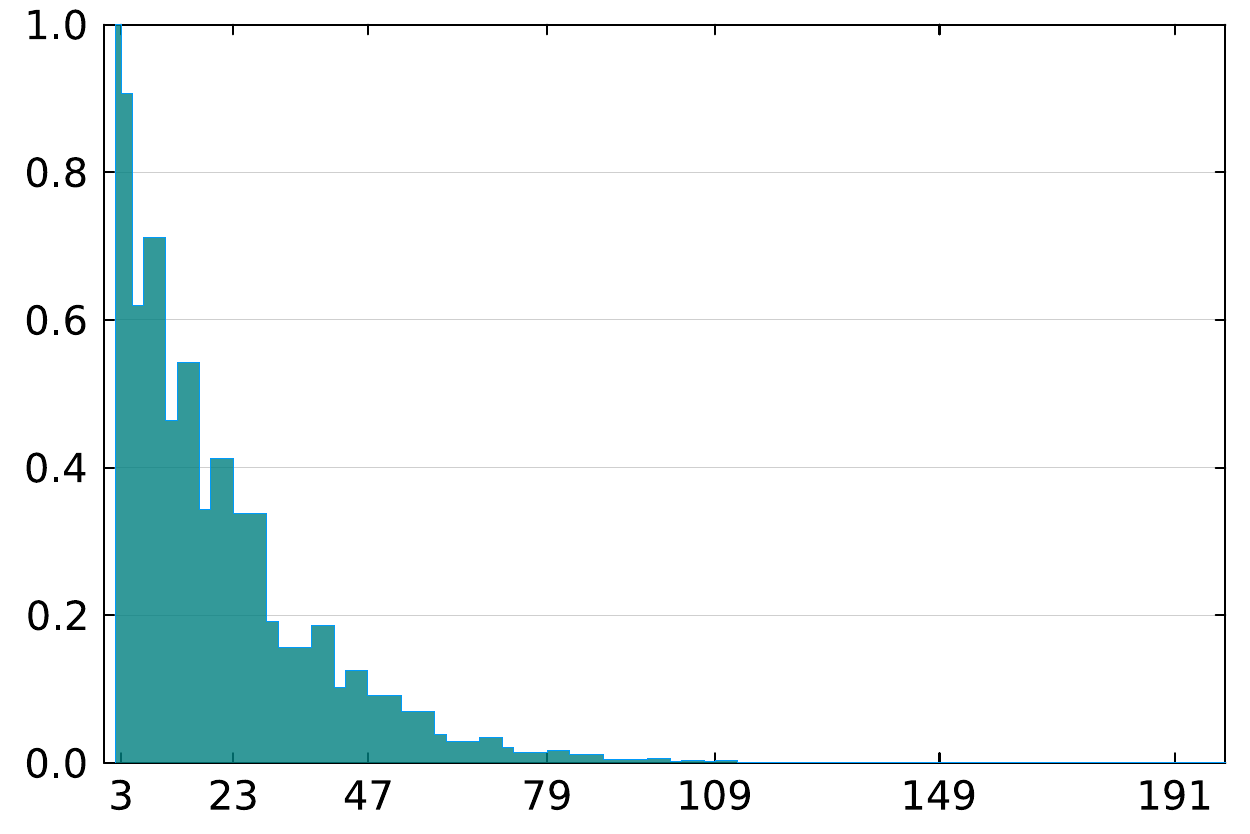}
  \caption{Let $n = 6$. For each prime $p$ in the range $[3,300]$ we sample up
    to $1,000, 000$ lines in $\F^n_p$ and record the proportion of lines that do
    not intersect $(\calL_{1}^p)^n$ on the $y$-axis.}
  \label{fig:non_liftable_are_sparse}
\end{figure}

\begin{remark}
  Assume that $\alpha^p \in C^m(X; \F_p)$ is a cocycle that is liftable using
  \autoref{cor:bound-sufficient-condition-lifting}. We can find a suitable
  integer representative of $\alpha^p$ by finding a scalar $r \in \{1, \dots,
  p-1\}$ such that every coordinate of the scaled cocycle $r\alpha^p$ has
  coefficients in the range
  \begin{equation*}
    \left\{-\frac{p-1}{2}, \dots, 0, \dots, \frac{p-1}{2}\right\}.
  \end{equation*}
  A brute-force search of $r$ involves checking all $n$ unique non-zero
  coordinates of $\alpha^p$ for $p-1$ possible scalars gives a practical
  complexity of $\calO(n(p - 1))$. In practice $n\ll | X^{(m)} |$, this is very
  efficient. However, guaranteeing a lift using the theoretical bounds found in
  \autoref{cor:we_can_always_lift_cocycles} involves a much higher complexity of
  $\calO(| X^{(m)}| (m+2)^n)$. This is computationally infeasible and
  significantly worse than standard algebraic methods. For instance, solving the
  underlying Diophantine system using the Smith Normal Form has a complexity of
  roughly $\calO( | X^{(m)}|^4 \log(| X^{(m)}|))$. We stress that in practice,
  we do not have to go to the bound given in
  \autoref{cor:we_can_always_lift_cocycles} as non-liftable cocycles are very
  sparse for primes much lower then the given bound. This is illustrated in
  \autoref{fig:non_liftable_are_sparse}.
\end{remark}

Note that in \autoref{cor:bound-sufficient-condition-lifting}, if $\alpha$ is a
lift of $r\alpha^p$ and $r^{-1}\alpha$ is a lift of $\alpha^p$ then
$r^{-1}\alpha$ does not stand a chance being a generator unless $r = \pm 1$.
Hence for the sake of circular coordinates it makes more sense to
\emph{smoothen} $\alpha$, even though $r^{-1}\alpha$ is in the preimage of
$\alpha^p$.

%
%
\section{Finding a cocycle with winding number \texorpdfstring{$1$}{1}}\label{sec:finding_gen_coc}
The winding number of an integer cocycle will be reflected in the circular
coordinates, cf.\ \autoref{fig:order_and_circcoords_1}. Hence, we would like for
the lifted integer cocycle to have winding number $1$. Moreover, in high
dimensional data it can be difficult to determine the winding number of an
integer cocycle based on its circular coordinates as seen in
\autoref{fig:generators_matters}. Hence, a method for determining the winding
number would be preferred. We present an algebraic method for determining the
winding number of a cocycle and reducing that cocycle into a cocycle with
winding number $1$ in \autoref{subsec:order_cocycle}.

\begin{figure}[htbp]
  \centering
  \includegraphics[width=1.\linewidth]{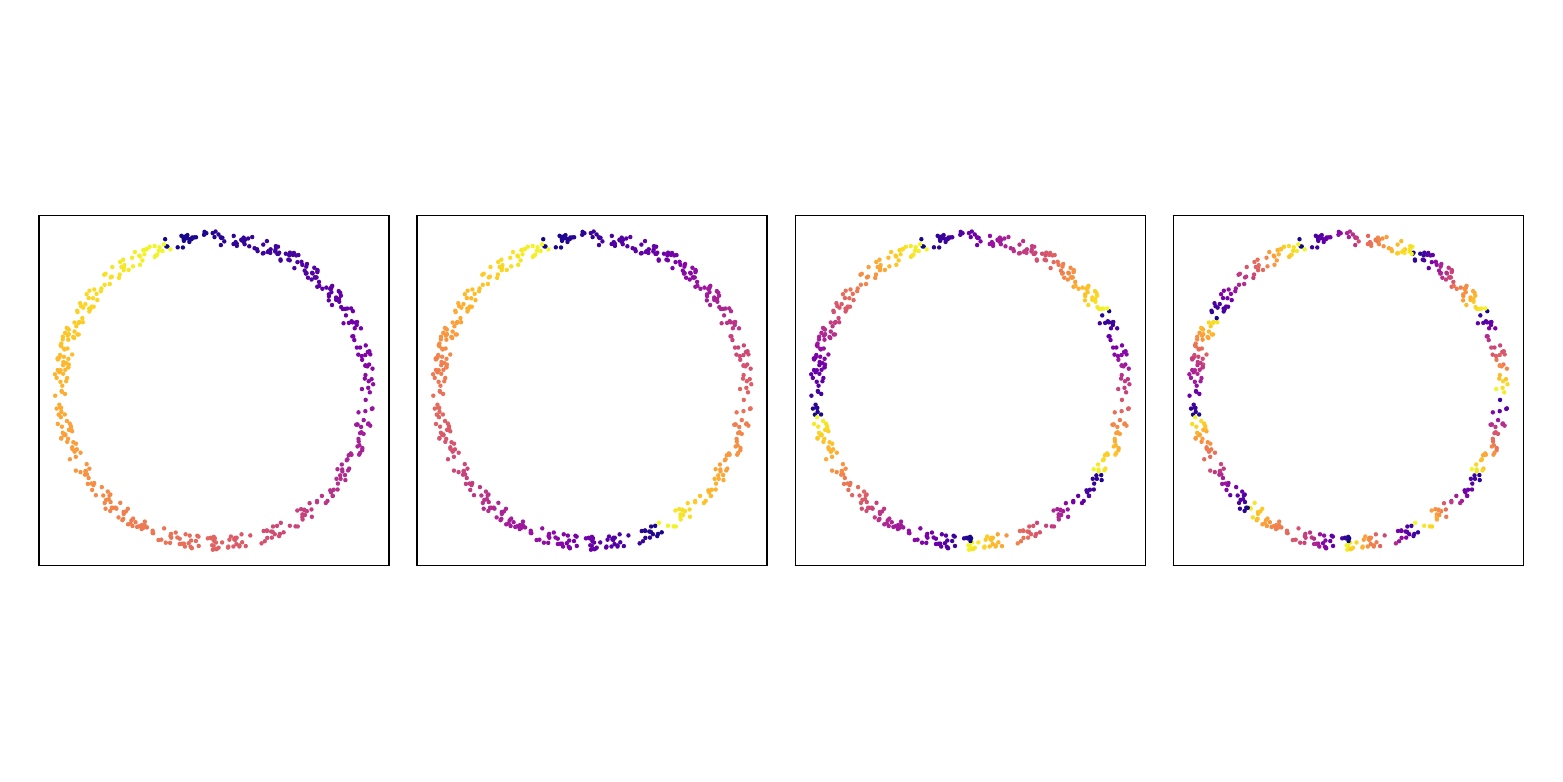}
  \caption{Circular coordinates constructed from cocycles of winding number $1,2,5$ and
    $10$.}
  \label{fig:order_and_circcoords_1}
\end{figure}

Our method of determining and reducing the winding number of a cocycle requires a
non-zero cycle representing the same feature. We can relatively \emph{easily}
compute a homology cycle representative with coefficients in a prime field
$\F_p$ by \cite{Virk23}. In \autoref{subsec:Liftinghomologyclasses} we will see
that we are able to lift such a cycle with coefficients in $\F_p$ to a cycle
with coefficients in the integers similarly as for cocycles.

\subsection{Determining the winding number of a cocycle}\label{subsec:order_cocycle}
\begin{definition}\label{def:winding_number}
Let $X$ be a simplicial complex and $m \in \N$. A cocycle
\begin{equation*}
  [\alpha] \in \Free{H^m(X; \Z)},
\end{equation*}
i.e., the free part of $H^m(X; \Z)$, has \emph{winding number} $w$ if there
exists some basis $(g_1, g_2, \dots, g_n)$ of $\Free{H^m(X; \Z)}$ such that
\begin{equation*}
  [\alpha] = [wg_i]
\end{equation*}
for some $i$. We denote the winding number of $\alpha$ by $\Omega(\alpha)$.
\end{definition}

Clearly, the winding number in \autoref{def:winding_number} is well-defined up
to sign.

In order to determine the winding number of an integer cocycle in a
computationally feasible way, we will use the Kronecker pairing as follows.

\begin{proposition}\label{prop:order_of_cocycle}
  Let $X$ be a simplicial complex such that $[\alpha] \in \Free {H^m(X; \Z)}$
  has winding number $w$. Then, for any $[\beta] \in H_m(X; \Z)$, the winding
  number $w$ must divide the Kronecker pairing $ \langle [\alpha], [\beta]
  \rangle $. Moreover, if
  \begin{equation*}
    \langle [\alpha], [\beta] \rangle =  \pm \prod_{i \in I} p_i^{n_i}
  \end{equation*}
  for some primes $p_i$ and integers $n_i \in \N$. Then, for any $i \in I$, the
  quotient map $\Z \to \F_{p_i}$ induces a map in cohomology. If
  \begin{equation*}
    \Free {H^m(X; \Z)} \ni [\alpha] \mapsto [\alpha^{p_i}] = [0] \in  H^m(X; \F_{p_i}),
  \end{equation*}
  then $[\alpha]$ has winding number dividing $p_i$. If $[\alpha]$ does not map
  to zero for any prime $p_i$, then $[\alpha]$ has winding number $1$.
\end{proposition}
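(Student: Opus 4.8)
The plan is to unpack the definition of winding number and then exploit the naturality of the Kronecker pairing under change of coefficients. Write $[\alpha] = [w g_i]$ in some basis $(g_1, \dots, g_n)$ of $\Free{H^m(X;\Z)}$, as guaranteed by \autoref{def:winding_number}. For the divisibility claim, I would first note that the Kronecker pairing $\langle -, [\beta]\rangle$ is $\Z$-linear in its first argument. Hence
\begin{equation*}
  \langle [\alpha], [\beta]\rangle = \langle [w g_i], [\beta]\rangle
  = w \langle [g_i], [\beta]\rangle,
\end{equation*}
and since $\langle [g_i], [\beta]\rangle \in \Z$, we conclude that $w \mid \langle [\alpha], [\beta]\rangle$. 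This settles the first assertion with essentially no work.

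For the second part, suppose $\langle [\alpha], [\beta]\rangle = \pm \prod_{i \in I} p_i^{n_i}$ and fix a prime $p_i$ in the factorization. The key tool is that the reduction map $\Z \to \F_{p_i}$ induces a map $H^m(X;\Z) \to H^m(X;\F_{p_i})$ that is compatible with the Kronecker pairing in the sense that reducing $[\alpha]$ modulo $p_i$ and pairing against the mod-$p_i$ reduction of $[\beta]$ equals the reduction modulo $p_i$ of the integer pairing. I would establish this compatibility from the fact that the pairing is computed cochain-level by evaluating a cocycle on a cycle, and reduction of coefficients commutes with evaluation. Granting this, if $[\alpha] \mapsto [\alpha^{p_i}] = 0$, then the mod-$p_i$ pairing vanishes, which forces $p_i \mid \langle [\alpha], [\beta]\rangle$ — consistent with the hypothesis — and more importantly exhibits $p_i$ as a candidate prime factor of $w$. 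To pin down that $w$ divides $p_i$ when $[\alpha^{p_i}] = 0$, I would argue that vanishing modulo $p_i$ means $[\alpha] \in p_i \cdot \Free{H^m(X;\Z)}$ (using freeness so there is no torsion obstruction), so in the basis expression $[\alpha] = [w g_i]$ the coefficient $w$ is divisible by $p_i$; combined with the fact that the only primes that can divide $w$ are those dividing the pairing, one assembles the final dichotomy.

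For the concluding statement, if $[\alpha]$ maps to zero for none of the primes $p_i$ dividing $\langle[\alpha],[\beta]\rangle$, then no prime $p_i$ divides $w$. But by the first part every prime factor of $w$ must also be a prime factor of $\langle[\alpha],[\beta]\rangle$, hence must lie among the $p_i$. Since none of these divide $w$, the winding number $w$ has no prime factors at all, forcing $w = 1$ (up to the sign ambiguity already noted).

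The main obstacle I anticipate is the step asserting that $[\alpha^{p_i}] = 0$ implies $[\alpha] \in p_i \Free{H^m(X;\Z)}$. The naive reduction sequence $H^m(X;\Z) \xrightarrow{\,p_i\,} H^m(X;\Z) \to H^m(X;\F_{p_i})$ coming from \autoref{thm:bockstein-les} is only exact as an integral statement, and the image of $H^m(X;\Z)$ in $H^m(X;\F_{p_i})$ is $H^m(X;\Z)/p_i H^m(X;\Z)$ only after accounting for the torsion and the universal coefficient $\mathrm{Tor}$ term. Since we have restricted to the free part $\Free{H^m(X;\Z)}$, I expect the $\mathrm{Tor}$ contribution to be controllable, but care is needed to verify that the class $[\alpha]$, viewed in the free part, really does become divisible by $p_i$ rather than merely mapping into a torsion ambiguity; this is where I would spend the bulk of the rigorous effort, likely by invoking the universal coefficient theorem directly and tracking the free summand explicitly.
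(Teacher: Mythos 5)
Your proof of the first claim coincides with the paper's: $\Z$-linearity of the Kronecker pairing applied to $[\alpha]=[wg_i]$. Your middle step also proves something correct and nontrivial, namely that $[\alpha^{p_i}]=0$ implies $p_i \mid w$: by exactness of the Bockstein sequence (\autoref{thm:bockstein-les}) the kernel of $H^m(X;\Z)\to H^m(X;\F_{p_i})$ is exactly $p_iH^m(X;\Z)$, and projecting away the torsion summand shows $[\alpha]\in p_i\Free{H^m(X;\Z)}$, so comparing basis coefficients gives $p_i\mid w$. (This is the sensible reading of the proposition's phrase ``winding number dividing $p_i$'', which taken literally would be false, e.g.\ when $w=p_i^2$.) Note that this direction is not argued in the paper at all; its proof consists of the linearity computation plus the single observation discussed next, so here you go beyond the paper.

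The genuine gap is in your concluding paragraph, which affirms the consequent. To pass from ``$[\alpha^{p_i}]\neq 0$ for every $i$'' to ``no $p_i$ divides $w$'' you need the implication: if $p_i\mid w$ then $[\alpha^{p_i}]=0$. What you proved is its converse; its contrapositive only yields that if $p_i$ does not divide $w$ then $[\alpha^{p_i}]\neq 0$, which gives no information under your hypothesis. The missing implication is fortunately the easy direction, and it is in fact the entirety of the paper's own argument for the latter claims: if $p_i\mid w$, then $[\alpha]=[wg_i]=p_i[(w/p_i)g_i]\in p_iH^m(X;\Z)$, so $[\alpha]$ maps to zero in $H^m(X;\F_{p_i})$. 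Inserting this one line, your argument closes: by the first claim every prime factor of $w$ lies among the $p_i$, and any such factor would force the corresponding $[\alpha^{p_i}]$ to vanish, contradicting the hypothesis; hence $w=1$. With that repair your write-up is complete, and indeed more thorough than the paper's proof, which establishes only the divisibility claim and this easy implication while leaving your middle step unproved.
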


\begin{proof}
  Let $(g^m_1,g^m_2, \dots, g^m_n)$ be a basis of $ \Free{H^m(X; \Z)}$ such that
  $[\alpha] = [wg^m_i]$ and let $\beta$ be any cycle such that 
  \begin{equation*}
       \langle [\alpha], [\beta] \rangle \neq 0,
  \end{equation*}
  as if the Kronecker pairing is zero there is nothing to show. Then
  \begin{equation*}
    \langle [\alpha], [\beta] \rangle  = \langle [wg^m_i], [\beta] \rangle =
    w \cdot\langle [g^m_i], [\beta] \rangle =  \pm \prod_{i \in I} p_i^{n_i},
  \end{equation*}
  for some primes $p_i$ and integers $n_i \in \N$. If $p_k$ is a prime factor of
  $w$, then, $[\alpha]$ is clearly mapped to zero under the the quotient map
  $H^m(X; \Z) \to H^m(X; \F_{p_k})$.
\end{proof}

From \autoref{prop:order_of_cocycle} we can create an algorithm for finding a
cocycle with winding number $1$. Suppose the cocycle $\alpha$ has winding number
$\pm \prod_{j \in J } p_j^{n_j}$ for some primes $p_j$ and integers $n_j \in
\N$. Let $\beta$ be a cycle such that the Kronecker pairing $\langle \alpha,
\beta \rangle$ is non-zero. Using \autoref{prop:order_of_cocycle} we can find
the primes $p_j$ but not their exponents. For any prime $p_k$, with $k \in J$,
we can solve the system
\begin{equation*} 
  \alpha =  p_k\gamma + \delta^{m-1}_{\Z}f
\end{equation*}
for $\gamma$ and $f$, and get a cocycle $\gamma$ with winding number $\pm (1/p_k)\prod_{j
  \in J } p_j^{n_j}$. By projecting with the quotient map $H^m(X, \Z) \to H^m(X,
\F_{p_k})$ we can check if $\gamma$ has winding number dividing $p_k$. If it has, we
repeat the above by solving $\gamma = p_k\gamma' + \delta^0 f$ for $\gamma'$ and
$f$. By repeating this step if necessary, we end up with a cocycle $\alpha'$,
whose winding number does not divide $p_k$. We repeat this finite process for all primes
$p_j$ and end up with a cocycle with winding number $1$. This leads us to
\autoref{alg:cocycle_reduction}. 

\begin{algorithm}
  \caption{Reducing the winding number of a Cocycle }
  \label{alg:cocycle_reduction}
  \begin{algorithmic}[1]
    \Require A cocycle representative $\alpha$ such that $[\alpha] \in \Free {H^m(X; \Z)}$.
    \Ensure A cocycle $\alpha_{\text{gen}}$, with winding number $1$ such that $[\alpha] =
    \Omega(\alpha)[\alpha_{\text{gen}}]$.

    \State Let $\alpha' \leftarrow \alpha$.
    \State Find the set of prime factors $P = \{p_j\}_{j \in J}$ of the winding number of
    $\alpha$ using \autoref{prop:order_of_cocycle}.

    \For{each prime $p_k \in P$}
      \While{the winding number of $\alpha'$ is divisible by $p_k$} \Comment{Check using
          the map to $H^m(X; \F_{p_k})$.}
        \State Solve the system $\alpha' = p_k\gamma + \delta_{\Z}^{m-1}f$ for a
        cocycle $\gamma$ and a cochain $f$.
        \State $\alpha' \leftarrow \gamma$
      \EndWhile
    \EndFor

    \State $\alpha_{\text{gen}} \leftarrow \alpha'$
    \State \Return $\alpha_{\text{gen}}$
  \end{algorithmic}
\end{algorithm}

\begin{example}\label{ex:trefoil_alg1}    
  The \texttt{Ripserer} algorithm returns a cocycle with winding number $2$, as
  verified using \autoref{prop:order_of_cocycle}, when computing the cohomology
  of the trefoil at a lower scale then in
  \autoref{fig:trefoil_fixing_coordinates}. Using
  \autoref{alg:cocycle_reduction} we successfully reduce the cocycle to one with
  winding number $1$. See \autoref{fig:trefoil_alg1} for a plot of the circular
  coordinates. See our
  \href{https://github.com/MathiasKarsrudNordal/lifting-heuristic-circular-coordinates/blob/main/misc/algorithm1_example_trefoil.ipynb}{GitHub}
  for full details.
\end{example}

\begin{figure}[htbp]
    \centering
    \includegraphics[width=0.8\linewidth]{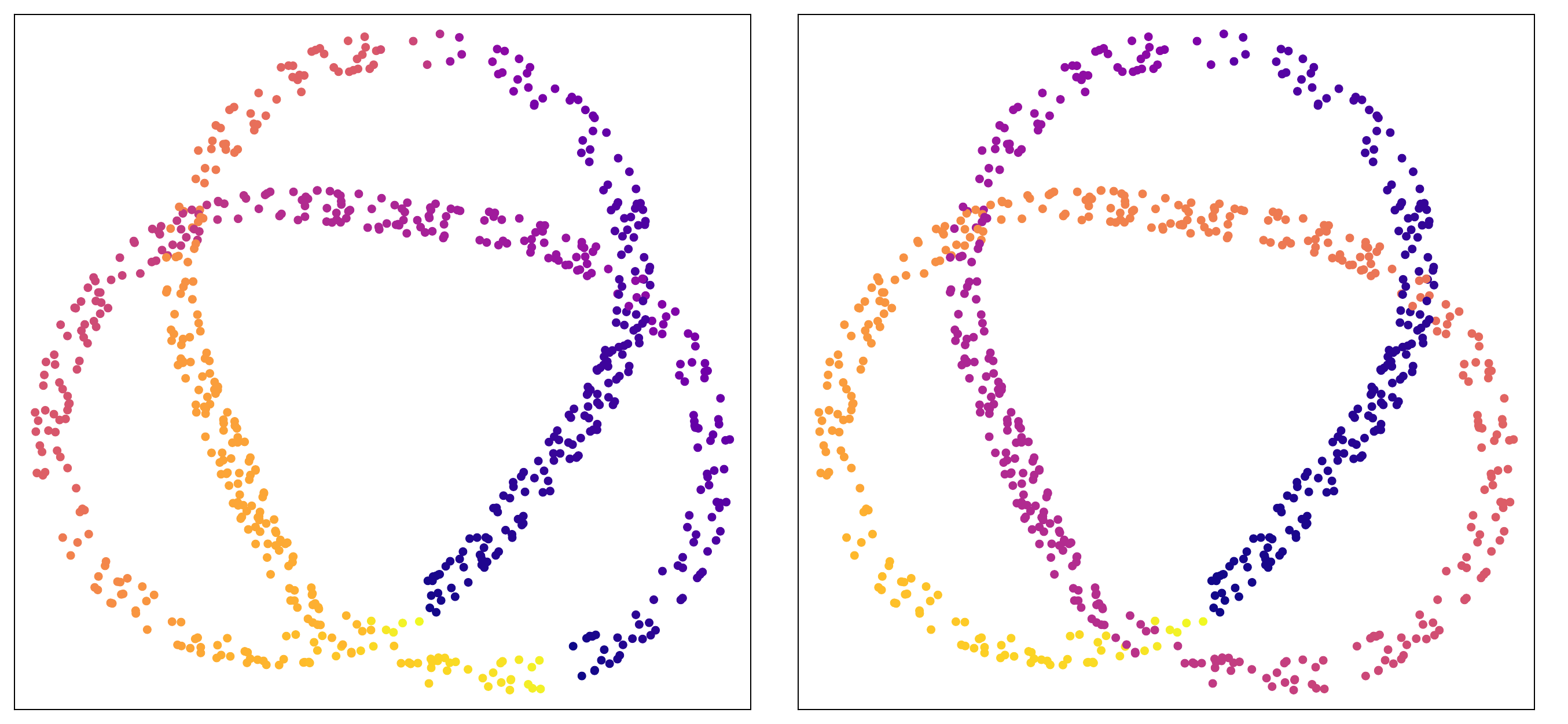}
    \caption{Circular coordinates of a cocycle with winding number $2$ (left)
      and of a cocycle with winding number $1$ (right).}
    \label{fig:trefoil_alg1}
\end{figure}

We note that \autoref{alg:cocycle_reduction} involves solving equations over the
integers. As earlier mentioned, solving equations over the integers is
computationally much harder then over a finite field of prime order. In practice
we do solve this over a prime field and lift the solution in each step. By the
results from \autoref{sec:lifting}, we have criteria for when
\autoref{alg:cocycle_reduction} still gives a desired cocycle.

\begin{proposition}\label{prop:lift_solutions}
  Let $m \geq 1$ and $\alpha \in C^m(X; \Z)$ be a cocycle with coefficients in
  $\mathcal{L}_m^p$ such that there is no $p$-torsion in $H^{m+1}(X; \Z)$. Let
  $p_k$ be an integer dividing the winding number of $\alpha$ and let $p>p_k$ be
  an odd prime such that $[\alpha^p] \neq 0 \in H^m(X; \F_p)$. Let $(\gamma^p,
  f^p)$ be a solution to the equation
  \begin{equation*}
    \alpha^p = p_k\gamma^p + \delta_{\F_p}^{m-1}f^p.
  \end{equation*}
  If $\delta_{\F_p}^{m-1}f^p$ and $p_k\gamma$ have coefficients in $\calL^p_m$,
  where $\gamma$ is the preimage of $\gamma^p$, using
  \autoref{heu:lifting-to-integer}, then $\gamma$ is a solution of
  \begin{equation*}
    \alpha = p_k\gamma + \delta_{\Z}^{m-1}\tilde f
  \end{equation*}
  for some $\tilde f \in C^{m-1}(X; \Z)$.
\end{proposition}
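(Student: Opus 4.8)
The plan is to reduce everything to a single integer cochain $\eta \coloneq \alpha - p_k\gamma$ and to show that it is an integer coboundary; producing any $\tilde f$ with $\delta_\Z^{m-1}\tilde f = \eta$ then rewrites as $\alpha = p_k\gamma + \delta_\Z^{m-1}\tilde f$, which is the assertion. The first thing I would verify is that the lift $\gamma$ is itself an integer cocycle. Applying $\delta_{\F_p}^m$ to $\alpha^p = p_k\gamma^p + \delta_{\F_p}^{m-1}f^p$ and using $\delta\delta = 0$ gives $p_k\,\delta_{\F_p}^m\gamma^p = 0$; since $0 < p_k < p$ with $p$ prime, $p_k$ is a unit in $\F_p$, so $\gamma^p$ is an $\F_p$-cocycle. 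The hypothesis that $p_k\gamma \in \calL_m^p$ forces $\gamma \in \calL_m^p$, and hence each coefficient of $\delta_\Z^m\gamma$, being a signed sum of at most $m+2$ coefficients of $\gamma$, has absolute value at most $(m+2)\lfloor(p-1)/(m+2)\rfloor \le p-1 < p$. As $\delta_\Z^m\gamma \equiv 0 \modd p$, it vanishes, so $\gamma$, and therefore $\eta$, is an integer cocycle.

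Next I would control the size of $\eta$. From $\alpha, p_k\gamma \in \calL_m^p$ and the triangle inequality, $\lvert \eta_\sigma\rvert \le 2b$ for every $\sigma$, where $b \coloneq \lfloor(p-1)/(m+2)\rfloor$. Reducing modulo $p$ gives $\eta^p = \alpha^p - p_k\gamma^p = \delta_{\F_p}^{m-1}f^p$, so by the hypothesis $\delta_{\F_p}^{m-1}f^p \in \calL_m^p$ and \autoref{prop:heu_perserves_norm} we also have $\lvert\eta^p_\sigma\rvert_p \le b$. Since $3b \le p-1$ for every $m \ge 1$, an integer with $\lvert\eta_\sigma\rvert \le 2b$ whose residue has $\lvert\cdot\rvert_p \le b$ must already satisfy $\lvert\eta_\sigma\rvert \le b$: otherwise both $\lvert\eta_\sigma\rvert$ and $p - \lvert\eta_\sigma\rvert$ would exceed $b$, forcing $\lvert\eta^p_\sigma\rvert_p > b$. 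Thus $\eta$ has coefficients in $\calL_m^p$, and in fact $\eta$ is exactly the heuristic lift of the $\F_p$-coboundary $\delta_{\F_p}^{m-1}f^p$.

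It remains to show $[\eta] = 0$ in $H^m(X;\Z)$. Here I would follow the template of \autoref{prop:constructing-integer-cocycle}. Reduction modulo $p$ shows $[\eta^p] = 0 \in H^m(X;\F_p)$, so the Bockstein long exact sequence (\autoref{thm:bockstein-les}) for $0 \to \Z \xrightarrow{\cdot p} \Z \to \F_p \to 0$ places $[\eta]$ in the kernel of reduction, i.e. $[\eta] \in p\,H^m(X;\Z)$; the no-$p$-torsion assumption on $H^{m+1}(X;\Z)$ is what makes reduction behave predictably here, rendering $\rho\colon H^m(X;\Z)\to H^m(X;\F_p)$ surjective with kernel exactly $p\,H^m(X;\Z)$. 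Equivalently, lifting $f^p$ to an integer cochain $f$ gives $\eta - \delta_\Z^{m-1}f = p\nu$ for a cocycle $\nu$, so that $[\eta] = p[\nu]$.

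I expect this final step to be the main obstacle, since the Bockstein argument only certifies that $[\eta]$ is divisible by $p$, whereas the conclusion demands $[\eta] = 0$. The gap must be closed using the bound $\eta \in \calL_m^p$ established above: I would argue that a cocycle all of whose coefficients lie in $\calL_m^p$ cannot represent a nontrivial $p$-divisible class, so that divisibility by $p$ collapses to vanishing. Concretely, I would try to combine the winding-number normalization $[\alpha] = \Omega(\alpha)\,g_i$ (which, in a basis of $\Free H^m(X;\Z)$, pins the coordinates of $p_k[\gamma]$ against those of $[\eta]\in p\,H^m(X;\Z)$) with the coefficient bound to force $[\nu] = 0$; alternatively, one can attempt to solve $\delta_\Z^{m-1}\tilde f = \eta$ directly through the Smith normal form of $\delta^{m-1}$ and show the heuristic lift of $f^p$ differs from a genuine integer solution only by a $p$-multiple of a coboundary. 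Verifying that the $\calL_m^p$-constraint eliminates the surviving $p$-divisible classes is the delicate point on which the proposition rests.
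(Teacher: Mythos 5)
Your first two steps are correct and, modulo cosmetic differences, they \emph{are} the paper's proof: the paper lifts $(p_k\gamma^p,\delta_{\F_p}^{m-1}f^p)$ to a pair $(\eta,\theta)$ via \autoref{heu:lifting-to-integer}, uses the bound $3\lfloor (p-1)/(m+2)\rfloor \le p-1 < p$ to conclude $\alpha - \eta - \theta = 0$ exactly over $\Z$, and identifies $\eta = p_k\gamma$; your cochain $\eta = \alpha - p_k\gamma$ is the paper's $\theta$, and your ``bounded integer with bounded residue'' argument is a repackaging of the same estimate. The problem is the final step, and your self-diagnosis there is exactly right. The paper disposes of it by writing $\theta - \delta_{\Z}^{m-1}f = py$ and asserting, ``similarly to \autoref{prop:constructing-integer-cocycle}'', that $\theta \in \im \delta_{\Z}^{m-1}$. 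But the mechanism of \autoref{prop:constructing-integer-cocycle} requires the residual class to be \emph{$p$-torsion}, which the no-$p$-torsion hypothesis then kills; here $y$ is a cocycle whose class satisfies $p[y] = [\theta]$, so $[y]$ is not known to be torsion at all, and the argument delivers only $[\theta] \in pH^m(X;\Z)$ --- precisely the $p$-divisibility you flagged as insufficient. So your attempt stalls at the same point where the paper's own proof contains an unjustified inference.

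Moreover, this gap cannot be closed from the stated hypotheses, so none of your proposed repairs (winding-number normalization, the $\calL_m^p$ bound on $\theta$, Smith normal form) can succeed without strengthening the statement. Take $X$ a triangulated circle with $32$ vertices and edges $e_0,\dots,e_{31}$, $m=1$, $p=31$, $p_k=3$, and $\alpha = e_0^*+e_1^*+e_2^*$. Then $\alpha$ is a cocycle with coefficients in $\calL_1^{31}=\{-10,\dots,10\}$ and winding number $3$, $[\alpha^{31}]\neq 0$ in $H^1(X;\F_{31})$, and $H^2(X;\Z)=0$, so all standing hypotheses hold. Let $\gamma^{31}$ be the all-ones cochain; its heuristic lift $\gamma$ is all ones, so $p_k\gamma$ has all coefficients equal to $3 \in \calL_1^{31}$, and $\alpha^{31}-3\gamma^{31}$ has coefficients $-2,-3 \in \calL_1^{31}$ with coefficient sum $3(-2)+29(-3) = -93 \equiv 0 \modd{31}$; since a $1$-cochain on the circle is a coboundary exactly when its coefficients sum to zero, this equals $\delta_{\F_{31}}^{0}f^{31}$ for some $f^{31}$, so $(\gamma^{31},f^{31})$ satisfies every hypothesis of the proposition. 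Yet $\alpha - 3\gamma$ has integer coefficient sum $-93 \neq 0$, hence is not an integer coboundary, and no $\tilde f$ exists: over $\Z$ the lift $\gamma$ has winding number $32$, not $1$. The class $[\theta] = -93[g] = 31\cdot(-3)[g]$ is a nonzero $p$-divisible class, exactly the surviving case you worried about. The proposition as stated thus needs an extra hypothesis --- for instance a bound on the coefficients of $f^p$ itself, which would force $\theta = \delta_\Z^{m-1}f$ on the nose and hence $y=0$ --- and your write-up, read as a proof, is incomplete at precisely the point where the paper's is too.
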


\begin{proof}
  Let $\alpha$ be as in the proposition and let $(\gamma^p, f^p)$ be a solution
  to the equation
  \begin{equation*}
    \alpha^p = p_k\gamma^p + \delta_{\F_p}^{m-1}f^p
  \end{equation*}
  such that $\delta_{\F_p}^{m-1}f^p$ and $p_k\gamma$ has coefficients in
  $\calL^p_m$. Note that this implies that $p_k\gamma^p$ and $\gamma$ also have
  coefficients in $\calL^p_m$. We lift $(\alpha^p, \gamma^p)$ to $(\alpha, \gamma
  )$ and lift $(p_k\gamma^p, \delta_{\F_p}^{m-1}f^p) $ to $(\eta,\theta)$ using
  \autoref{heu:lifting-to-integer}.

  We note that
  \begin{equation*}
    \alpha = \eta + \theta,
  \end{equation*}
  is equivalent to
  \begin{equation*}
    \alpha- \eta - \theta = 0.
  \end{equation*}
  As the equation hold modulo $p$, we have
  \begin{equation*}
    \alpha - \eta - \theta = xp,
  \end{equation*}
  for some $x \in \Z^{|X(1)|}$. As all summands have coefficients in $\calL_m^p$
  for $m \geq 1$, we can conclude, as in \autoref{thm:lifting_general}, that $x
  = 0$. Since $\eta$ and $p_k\gamma$ both have coefficients in $\calL_m$, and
  both projects to $p_k\gamma^p$, we must have that $\eta = p_k\gamma$. It is
  sufficient to show that $\theta \in \im \delta_{\Z}^{m-1}$. We note that
  $\theta - \delta_{\Z}^{m-1}f =yp $ for some $y \in \Z^{|X(1)|}$. Then,
  similarly to \autoref{prop:constructing-integer-cocycle}, there is an
  $\tilde f$ such that $ \delta_{\Z}^{m-1} \tilde f = \theta$.
\end{proof}

The importance of identifying a cocycle with winding number $1$ is illustrated
in \autoref{fig:generators_matters}. In this example, we sample points from a
circle embedded in $300$-dimensional Euclidean space, i.e., $S^1 \subseteq
\R^{300}$. While PCA fails to recover the circular structure, a more
topologically aware approach such as persistent homology succeeds. This example
underscores the necessity of the procedures developed earlier in this section.

\begin{figure}[htbp]
    \centering
    \subfigure{
      \centering
      \includegraphics[width=0.4\textwidth]{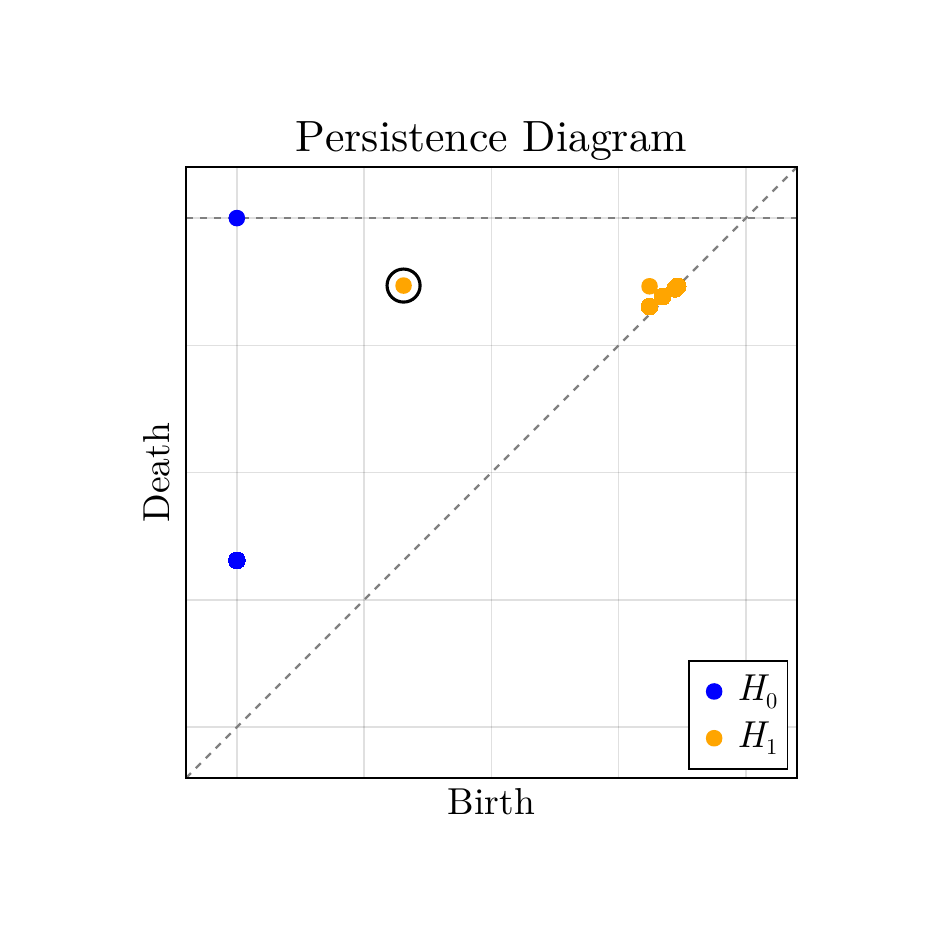}
    }
    \hfill
    \subfigure{
      \centering
      \includegraphics[width=0.55\textwidth]{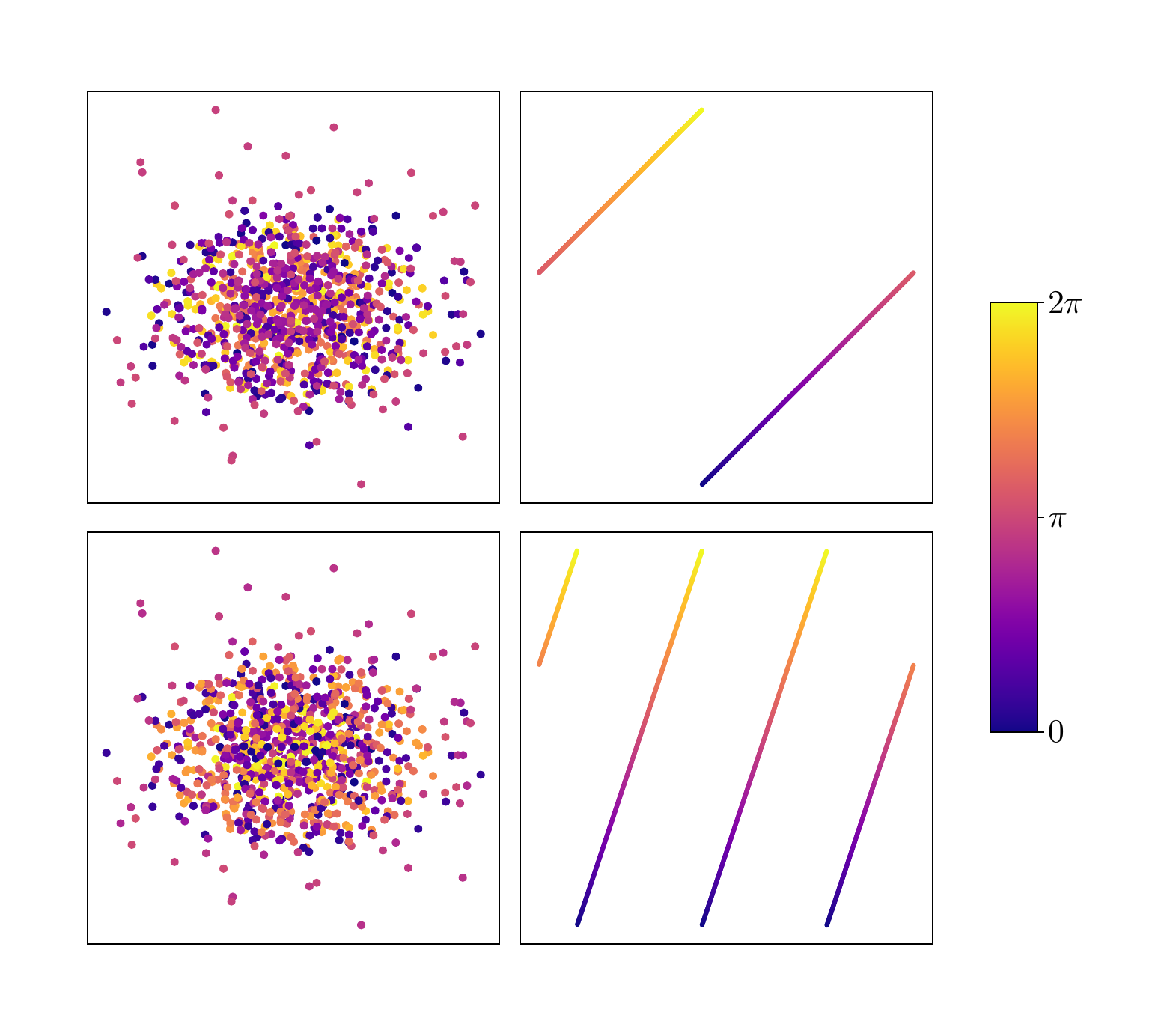}
    }
    \caption{From the highlighted 1-dimensional feature (black circle) of the
      persistence diagram (left) we extract with two different integer cocycle
      representatives: one with winding number $1$ (top row) and one winding
      number $3$ (bottom row). The middle column shows the 2D PCA projection of
      the circle colored by the circular coordinates computed from each
      representative. The right column compares the computed circular
      coordinates (y-axis) against the ground-truth circular coordinates
      (x-axis). While the generator produces circular coordinates that align
      with the ground truth, the representative of winding number 3 identifies
      points on different vertical lines with the same color, leading to a
      failure in dimensionality reduction.}
    \label{fig:generators_matters}
\end{figure}

We note that in order to compute the Kronecker pairing as in
\autoref{prop:order_of_cocycle} we need an integer cocycle representative. We
will now show that we can lift cycles over a prime field similarly as for
cocycles.

\subsection{Lifting homology classes}\label{subsec:Liftinghomologyclasses}
Similarly as for when lifting cocycles, we start with a cycle $[\alpha^p] \in
H_m(X; \F_p)$ and use the Bockstein exact sequence to answer whether a lift is
possible or not. It turns out that when we work with homology, we avoid issues
with torsion when $m = 1$, as we always have a surjective map
\begin{equation*}
  \begin{tikzcd}
    {H_1(X; \Z)} &
    {H_1(X; \F_p).}
    \arrow[from=1-1, to=1-2]
  \end{tikzcd}
\end{equation*}
This follows from the fact that the connecting homomorphism
\begin{equation*}
  \begin{tikzcd}
    {H_1(X; \F_p)} &
    {H_0(X; \Z)}
    \arrow[from=1-1, to=1-2]
  \end{tikzcd}
\end{equation*}
must be zero as
\begin{equation*}
  \begin{tikzcd}[column sep = scriptsize]
    {0} &
    {H_0(X; \Z)} &
    {H_0(X; \Z)} &
    {H_0(X; \F_p)} &
    {0}
    \arrow[from=1-1, to=1-2]
    \arrow[from=1-2, to=1-3]
    \arrow[from=1-3, to=1-4]
    \arrow[from=1-4, to=1-5]
  \end{tikzcd}
\end{equation*}
is a short exact sequence.

For $m > 1$, we must have that there is no $p$-torsion in degree $m - 1$, in
order for
\begin{equation*}
  \begin{tikzcd}
    {H_m(X; \Z)} &
    {H_m(X; \F_p)}
    \arrow[from=1-1, to=1-2]
  \end{tikzcd}
\end{equation*}
to be surjective.

If there is $p$-torsion, we pick another prime. If there is no $p$ torsion we
want to find an explicit preimage in a similar fashion as for cocycles.

We have similar lifting heuristics for cycles as we have for cocycles.

\begin{heuristic}\label{heu:lifting-to-integer_cycles}
  Let $X$ be a finite simplicial complex and $p$ be an odd prime. Suppose that
  \begin{equation*}
    \alpha^p = \sum_{\sigma \in X^{(m)}} \alpha_{\sigma}^p\sigma
  \end{equation*}
  is an $m$-chain. We construct an integer chain $\alpha$ by choosing each
  coefficient $\alpha_\sigma$ from the range
  \begin{equation*}
    \left\{-\frac{p-1}{2}, \ldots, -1, 0, 1, \ldots, \frac{p-1}{2}\right\}
  \end{equation*}
  where $\alpha$ is given by
  \begin{equation*}
    \alpha = \sum_{\sigma \in X^{(m)}} \alpha_{\sigma}\sigma
    \text{ and where }\alpha_\sigma =
    \begin{cases}
      \alpha_\sigma^p & \alpha_\sigma^p \leq \frac{p - 1}{2} \\[.25cm]
      \alpha_\sigma^p - p & \alpha_\sigma^p > \frac{p - 1}{2}.
    \end{cases}
  \end{equation*}
\end{heuristic}

Let us now assume that $\alpha^p$ is a cocycle. Then
\begin{align*}
  \delta_{\F_p}^m \alpha^p &= \delta_{\F_p}^m(\sum_{\sigma \in X^{m}} \alpha^p_{\sigma}\sigma)\\
  &= \sum_{\sigma \in X^{m}} \alpha^p_{\sigma}(\sum_{i \in \sigma} (-1)^{i}\sigma_{i}) \\
  &\equiv 0 \modd{p},
\end{align*}
where $\sigma_i$ denotes the $i$th face of $\sigma$.

The coefficients in front of every $(m-1)$-simplex, $\sigma_i$, must be zero,
modulo $p$. Hence, for every $(m - 1)$-simplex $\tau$, we must have
\begin{equation}\label{eq:vertex_coefficients}
  \sum_{\substack{\sigma \in X^{(m)} \\ \tau \text{ is a face of } \sigma}} \pm \alpha^p_{\sigma} =
  \sum_{\sigma \in I_{\tau}} \alpha_\sigma^p \equiv 0 \modd{p}.
\end{equation}
Here $\pm$ depends on the orientation of $\tau$ within $\sigma$. For our lifted
chain, $\alpha$, to also be a cycle, we need that
\begin{equation}\label{eq:vertex_coefficients_2}
 \sum_{\sigma \in I_{\tau}} \pm \alpha_{\sigma} = 0.
\end{equation}

We state a sufficient condition for the sum in
\autoref{eq:vertex_coefficients_2} to be zero, depending on the coefficients of
$\alpha^p$, similarly to the lifting of cocycles in
\autoref{cor:bound-sufficient-condition-lifting}.

By applying an analogous argument of the proof of \autoref{thm:lifting_general},
we get the following corollary.

\begin{corollary}\label{cor:lifting_cycle}
  Let $p$ be an odd prime. We can lift a cycle $\alpha^p \in C_m(X; \F_p)$ to a
  cycle $\alpha \in C_m(X; \Z)$, by scaling and using
  \autoref{heu:lifting-to-integer_cycles}, if there is some $r \in \F_p^*$, such
  that for all $(m-1)$-simplices $\tau$, we have that the non-zero $\tau$
  coefficients of $r\alpha^p$ (as in \autoref{eq:vertex_coefficients}),
  $r\alpha_{\sigma}^p$, all lie in
  \begin{equation*}
    \calL_{\vert I_{\tau} \vert-2}^p = \left\{-\left\lfloor \frac{p-1}{\vert I_{\tau} \vert }
      \right\rfloor, \dots 0, \dots, \left\lfloor \frac{p-1}{\vert I_{\tau} \vert }
      \right\rfloor \right\},
  \end{equation*}
  where
  \begin{equation*}
    I_\tau \coloneq \{\sigma \in X^{(m)} \mid \alpha_\sigma^p \neq 0 \text{ and } \tau
    \text{ is a face of } \sigma\}.
  \end{equation*}
\end{corollary}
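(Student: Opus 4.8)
The plan is to mirror the proof of \autoref{thm:lifting_general}, viewing each $(m-1)$-simplex $\tau$ as supplying an index set $I_\tau$ together with the signed linear relation \eqref{eq:vertex_coefficients} that its coefficients satisfy. The only genuinely new feature compared to the cocycle setting is the presence of the orientation signs $\pm$ in \eqref{eq:vertex_coefficients_2}, and these will be absorbed harmlessly by the triangle inequality.

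First I would fix $r \in \F_p^*$ satisfying the hypothesis and form the scaled chain $r\alpha^p$, which is again a cycle since scaling commutes with the boundary map; in particular each relation \eqref{eq:vertex_coefficients} continues to hold for the coefficients $r\alpha_\sigma^p$. Lifting $r\alpha^p$ coefficientwise via \autoref{heu:lifting-to-integer_cycles} produces an integer chain, call it $r\alpha$, whose coefficients reduce to those of $r\alpha^p$ modulo $p$. Since \autoref{heu:lifting-to-integer_cycles} is the same coefficientwise rule as \autoref{heu:vector_version}, \autoref{prop:heu_perserves_norm} applies verbatim and gives $\vert (r\alpha)_\sigma \vert = \vert r\alpha_\sigma^p \vert_p$ for every $\sigma$.

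Next, for a fixed $(m-1)$-simplex $\tau$, the hypothesis bounds each nonzero coefficient by $\vert r\alpha_\sigma^p \vert_p \leq \lfloor (p-1)/\vert I_\tau \vert \rfloor$, so summing over the $\vert I_\tau \vert$ terms yields
\begin{equation*}
  \sum_{\sigma \in I_\tau} \vert (r\alpha)_\sigma \vert = \sum_{\sigma \in I_\tau} \vert r\alpha_\sigma^p \vert_p
  \leq \vert I_\tau \vert \left\lfloor \frac{p-1}{\vert I_\tau \vert}\right\rfloor \leq p - 1 < p.
\end{equation*}
The signed boundary coefficient $\sum_{\sigma \in I_\tau} \pm (r\alpha)_\sigma$ is an integer whose absolute value is, by the triangle inequality, bounded by the displayed sum and hence strictly less than $p$. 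On the other hand, reducing modulo $p$ and using \eqref{eq:vertex_coefficients} for $r\alpha^p$ shows this integer is $\equiv 0 \pmod{p}$. An integer divisible by $p$ with absolute value less than $p$ must vanish, so \eqref{eq:vertex_coefficients_2} holds for $r\alpha$ at $\tau$.

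Since $\tau$ was arbitrary, every boundary relation \eqref{eq:vertex_coefficients_2} is satisfied, so $r\alpha$ is an integer cycle lifting $r\alpha^p$. Rescaling by $r^{-1} \in \F_p^*$ exactly as in \autoref{thm:lifting_general} recovers a chain reducing to $\alpha^p$ under the quotient map, completing the lift. I expect no serious obstacle here: the sole deviation from \autoref{thm:lifting_general} is the orientation signs, and since the decisive estimate controls the \emph{absolute values} of the coefficients, the signs never enter the bound.
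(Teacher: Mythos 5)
Your proof is correct and is precisely the argument the paper intends: the paper offers no separate proof of \autoref{cor:lifting_cycle}, saying only that it follows ``by applying an analogous argument of the proof of \autoref{thm:lifting_general}'', and your writeup carries out exactly that adaptation (scale, lift coefficientwise, invoke \autoref{prop:heu_perserves_norm}, bound each signed boundary sum at $\tau$ by $\vert I_\tau\vert\lfloor (p-1)/\vert I_\tau\vert\rfloor < p$, conclude it vanishes, then rescale by $r^{-1}$). Your observation that the orientation signs are absorbed by the triangle inequality is the one genuinely new point relative to the cocycle case, and you handle it correctly.
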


\begin{example}\label{ex:lifting_cycles}
  Note that when all the indexing sets $I_\tau$ have size $2$, then any cocycle
  $\alpha^p$ is liftable for any odd prime $p$. Hence $1$-cycles such as
  depicted in \autoref{fig:always_liftable}, are always liftable. Non-liftable
  $m$-cycles, do however also exist, but we need at least one $(m -
  1)$-simplex connected to at least $3$ $m$-simplices where the $m$-cycle has a
  non-zero coefficient.

  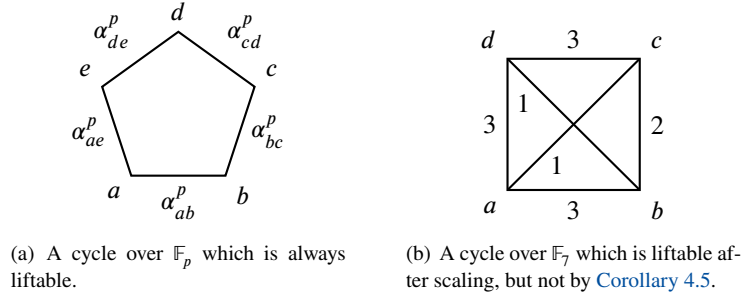
\begin{figure}[htbp]
    \centering
    \subfigure[A cycle over $\F_p$ which is always liftable.]{
      \centering
      \begin{tikzpicture}[thick]
        \node[name = pentagon,%
        regular polygon,%
        regular polygon sides = 5,%
        draw,%
        minimum width = 6em] at (0,0){};
        \node[above] at (pentagon.corner 1){$d$};
        \node[above left] at (pentagon.corner 2){$e$};
        \node[below left] at (pentagon.corner 3){$a$};
        \node[below right] at (pentagon.corner 4){$b$};
        \node[above right] at (pentagon.corner 5){$c$};
        \node[above left] at (pentagon.side 1){$\alpha_{de}^p$};
        \node[left] at (pentagon.side 2){$\alpha_{ae}^p$};
        \node[below] at (pentagon.side 3){$\alpha_{ab}^p$};
        \node[right] at (pentagon.side 4){$\alpha_{bc}^p$};
        \node[above right] at (pentagon.side 5){$\alpha_{cd}^p$};
        \node at (2,0){\mbox{}};
        \node at (-2,0){\mbox{}};
      \end{tikzpicture}
      \label{fig:always_liftable}
    }
    \qquad
    \subfigure[A cycle over $\F_7$ which is liftable after scaling, but not by
    \autoref{cor:lifting_cycle}.]{
      \centering
      \begin{tikzpicture}[thick]
        \node[name = kvadrat,%
        regular polygon,%
        regular polygon sides = 4,%
        draw,%
        minimum width = 7em] at (0,0){};
        \draw[shorten <= 0.025cm,%
        shorten >= 0.025cm] (kvadrat.corner 1) --
        node[pos = 0.75, right, yshift = -0.1cm]{$1$} (kvadrat.corner 3);
        \draw[shorten <= 0.025cm,%
        shorten >= 0.025cm] (kvadrat.corner 2) --
        node[pos = 0.25, left, yshift = -0.15cm]{$1$} (kvadrat.corner 4);
        \node[above right] at (kvadrat.corner 1){$c$};
        \node[above left] at (kvadrat.corner 2){$d$};
        \node[below left] at (kvadrat.corner 3){$a$};
        \node[below right] at (kvadrat.corner 4){$b$};
        \node[above] at (kvadrat.side 1){$3$};
        \node[left] at (kvadrat.side 2){$3$};
        \node[below] at (kvadrat.side 3){$3$};
        \node[right] at (kvadrat.side 4){$2$};
        \node at (2,0){\mbox{}};
        \node at (-2,0){\mbox{}};
      \end{tikzpicture}
      \label{fig:non_liftable}
    }
    \caption{Comparison of liftable and non-liftable cycles.}
    \label{fig:cycles_comparison}
  \end{figure}

  Let $p = 7$ and consider the $1$-chain, $\alpha^p = 3ab +2bc +3cd +3ad +
  ac+bd$, as depicted in \autoref{fig:non_liftable}. One can verify that
  that
  \begin{equation*}
    \delta_{\F_p}^1(\alpha^p) = -7a + 0b + 0c + 7d,
  \end{equation*}
  and that the $1$-chain is in fact a $1$-cycle.

  As each indexing set $I_{\tau}$ has size $3$, $\alpha^p$ is liftable assuming
  it has coefficients in the range $\{0,1,2,5,6\}$. It has not, and we can
  verify that it is not liftable. We try to scale $\alpha^p$ with $r \in
  \F_p^*$, in order to produce a lift. In \autoref{tab:mult_table}, we can a see
  that the coefficients of $r\alpha^p$ never lands inside the the prescribed
  range and we do not know from \autoref{cor:lifting_cycle} that a lift will
  succeed. However, note that $2\alpha^p$ is liftable with
  \autoref{heu:lifting-to-integer_cycles} and we can find a lift of $\alpha^p$
  using the trick as in \autoref{thm:lifting_general}. If we are willing to do
  computations from the start in a prime field $\F_p$ where $p > 3^6 =729$, then
  \autoref{cor:we_can_always_lift_cycles} guarantee that any cocycle is liftable
  using \autoref{cor:lifting_cycle}. We note that empirically, with a lower
  prime, non-liftable cycles are sparse as seen in \autoref{fig:non_liftable_are_sparse}.

  \begin{table}[htbp]
    \centering
    \caption{Multiplicative table coefficients of the cycle in
      \autoref{ex:lifting_cycles} over $\F_7$.}
    \label{tab:mult_table}
    \begin{tabular}{c c c c c c c}
      \toprule
      $r$ & $\mathbf{1}$ & $\mathbf{2}$ & $\mathbf{3}$ & $\mathbf{4}$ &
      $\mathbf{5}$ & $\mathbf{6}$\\
      \midrule
      $\mathbf{1}$ & $1$ & $2$ & $3$ & $4$ & $5$ & $6$\\
      $\mathbf{2}$ & $2$ & $4$ & $6$ & $1$ & $3$ & $5$\\
      $\mathbf{3}$ & $3$ & $6$ & $2$ & $5$ & $1$ & $4$\\
      \bottomrule
    \end{tabular}
  \end{table}
\end{example}

By an analogous proof of \autoref{cor:we_can_always_lift_cocycles}, we get the
following corollary.

\begin{corollary}\label{cor:we_can_always_lift_cycles}
  Let $\alpha^p \in C_m(X; \F_p)$ be a cycle and let
  \begin{equation*}
    M \coloneq
    \maxx_{\tau \in X^{(m-1)} }|\{\sigma \in X^{(m)} \mid \alpha_\sigma^p \neq 0
    \text{ and } \tau \text{ is a face of } \sigma\}|.
  \end{equation*}
  If $p-1 > M^n$ there exists a lift $\alpha \in C_m(X; \Z)$ that
  maps to $\alpha^p$ under the projection map.
\end{corollary}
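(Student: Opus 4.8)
The plan is to mirror the proof of Corollary~\ref{cor:we_can_always_lift_cocycles}, replacing the role of the uniform bound $\lfloor (p-1)/(m+2)\rfloor$ by the local bounds coming from each indexing set $I_\tau$. The key observation is that Corollary~\ref{cor:lifting_cycle} requires, for each $(m-1)$-simplex $\tau$, that the nonzero $\tau$-coefficients of $r\alpha^p$ all lie in $\calL_{|I_\tau|-2}^p$, i.e.\ that $|r\alpha_\sigma^p|_p \leq \lfloor (p-1)/|I_\tau|\rfloor$ for every $\sigma \in I_\tau$. Since $M = \maxx_\tau |I_\tau|$, the bound $\lfloor (p-1)/M\rfloor \leq \lfloor (p-1)/|I_\tau|\rfloor$ holds for every $\tau$, so it suffices to find a single scalar $r \in \F_p^*$ with $|r\alpha_i^p|_p \leq \lfloor (p-1)/M\rfloor$ uniformly over all $n$ nonzero coordinates. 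This is precisely the conclusion furnished by Theorem~\ref{thm:we_can_always_lift} with $k = M$.

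First I would invoke Theorem~\ref{thm:we_can_always_lift} with $k \coloneq M$ and with $n$ the number of nonzero coordinates of $\alpha^p$. The hypothesis $p - 1 > M^n$ guarantees, via the pigeonhole partition of $\F_p^n$ into $M^n$ cubes used in that theorem's proof, that there exists $r \in \F_p^*$ satisfying
\begin{equation*}
  |r\alpha_i^p|_p \leq \left\lfloor \frac{p-1}{M} \right\rfloor
\end{equation*}
for all coordinates $i$. Next I would restrict attention to the nonzero $\tau$-coefficients for a fixed $(m-1)$-simplex $\tau$: for every such $\sigma \in I_\tau$ we have $|I_\tau| \leq M$, hence $\lfloor (p-1)/M\rfloor \leq \lfloor (p-1)/|I_\tau|\rfloor$, so $r\alpha_\sigma^p$ lands in $\calL_{|I_\tau|-2}^p$. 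Since $\tau$ was arbitrary, the scaling hypothesis of Corollary~\ref{cor:lifting_cycle} is satisfied.

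Finally I would apply Corollary~\ref{cor:lifting_cycle} directly: scaling $\alpha^p$ by $r$ and applying Heuristic~\ref{heu:lifting-to-integer_cycles} yields a cycle, and scaling back by $r^{-1}$ recovers a lift $\alpha \in C_m(X;\Z)$ of the original $\alpha^p$, exactly as in the cocycle case (Corollary~\ref{cor:bound-sufficient-condition-lifting}). The only mild subtlety is bookkeeping rather than a genuine obstacle: one must confirm that the $n$ in Theorem~\ref{thm:we_can_always_lift} is taken to be the total number of nonzero coordinates of $\alpha^p$ (matching the statement's exponent $M^n$), and that the single global scalar $r$ works simultaneously for all $\tau$ — which it does because the local bound $\lfloor(p-1)/|I_\tau|\rfloor$ is weakest (largest) when $|I_\tau|$ is smallest, so the uniform bound for $k = M$ dominates them all. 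I expect no serious difficulty, as the argument is entirely parallel to Corollary~\ref{cor:we_can_always_lift_cocycles} once the substitution $m+2 \rightsquigarrow M$ is made and the per-simplex conditions of Corollary~\ref{cor:lifting_cycle} are unpacked.
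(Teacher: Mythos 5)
Your proposal is correct and is exactly the argument the paper intends: the paper proves this corollary "by an analogous proof" to Corollary~\ref{cor:we_can_always_lift_cocycles}, namely invoking the pigeonhole argument of Theorem~\ref{thm:we_can_always_lift} with $k = M$ to obtain a uniform scalar $r$, then feeding the resulting bound $\lfloor (p-1)/M\rfloor \leq \lfloor (p-1)/|I_\tau|\rfloor$ into Corollary~\ref{cor:lifting_cycle}. Your bookkeeping remarks (taking $n = |\suppp(\alpha^p)|$ and noting that one global $r$ satisfies every local condition) are precisely the substitutions the paper leaves implicit.
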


\begin{remark}\label{rem:range-liftable-coefficients}
  The range of liftable coefficients in \autoref{cor:lifting_cycle} does not
  explicitly depend on the degree $m$, as it did for cocycles in
  \autoref{cor:bound-sufficient-condition-lifting}. In practice however, as the
  number of $m$-simplices grow exponential in $m$, one can expect the range to
  get smaller as $m$ grows.
\end{remark}


\begin{thebibliography}{10}

\bibitem{blumberg}
Andrew~J. Blumberg, Mathieu Carrière, Jun~Hou Fung, and Michael~A. Mandell.
\newblock Subsampling, aligning, and averaging to find circular coordinates in
recurrent time series.
\newblock Preprint \href{https://arxiv.org/abs/2412.18515}{arXiv: 2412.18515}, 2024.

\bibitem{Cufar2020}
Matija \v{C}ufar.
\newblock Ripserer.jl: flexible and efficient persistent homology computation in
Julia.
\newblock {\em Journal of Open Source Software}, 5(54):2614, 2020.

\bibitem{Virk23}
Matija \v{C}ufar and \v{Z}iga Virk.
\newblock Fast computation of persistent homology representatives with involuted
persistent homology.
\newblock {\em Found. Data Sci.}, 5(4):466--479, 2023.

\bibitem{deSilva}
Vin de~Silva, Dmitriy Morozov, and Mikael Vejdemo-Johansson.
\newblock Persistent cohomology and circular coordinates.
\newblock {\em Discrete Comput. Geom.}, 45(4):737--759, 2011.

\bibitem{Hermansen-Nature}
Richard~J. Gardner, Erik Hermansen, Marius Pachitariu, Yoram Burak, Nils~A.
Baas, Benjamin~A. Dunn, May-Britt Moser, and Edvard~I. Moser.
\newblock Toroidal topology of population activity in grid cells.
\newblock {\em Nature}, 602(7895):123--128, 2022.

\bibitem{generalized-penelty}
Hengrui Luo, Alice Patania, Jisu Kim, and Mikael Vejdemo-Johansson.
\newblock Generalized penalty for circular coordinate representation.
\newblock {\em Found. Data Sci.}, 3(4):729--767, 2021.

\bibitem{paik2023circularcoordinatesdensityrobustanalysis}
Taejin Paik and Jaemin Park.
\newblock Circular coordinates for density-robust analysis.
\newblock Preprint \href{https://arxiv.org/abs/2301.12742}{arXiv: 2301.12742}, 2023.

\bibitem{Perea2018}
Jose~A. Perea.
\newblock Multiscale projective coordinates via persistent cohomology of sparse
filtrations.
\newblock {\em Discrete Comput. Geom.}, 59(1):175--225, 2018.

\bibitem{Perea2020}
Jose~A. Perea.
\newblock Sparse circular coordinates via principal {$\mathbb{Z}$}-bundles.
\newblock In {\em Topological data analysis---the {A}bel {S}ymposium 2018},
volume~15 of {\em Abel Symp.}, pages 435--458. Springer, Cham, 2020.

\bibitem{rybakken2019decoding}
Erik Rybakken, Nils Baas, and Benjamin Dunn.
\newblock Decoding of neural data using cohomological feature extraction.
\newblock {\em Neural Comput.}, 31(1):68--93, 2019.

\bibitem{Schonsheck_2023}
Nikolas~C. Schonsheck and Stefan~C. Schonsheck.
\newblock Spherical coordinates from persistent cohomology.
\newblock {\em J. Appl. Comput. Topol.}, 8(1):149--173, 2024.

\bibitem{Toroidal}
Luis Scoccola, Hitesh Gakhar, Johnathan Bush, Nikolas
Schonsheck, Tatum Rask, Ling Zhou, and Jose~A. Perea.
\newblock Toroidal coordinates: decorrelating circular coordinates with lattice
reduction.
\newblock In {\em 39th {I}nternational {S}ymposium on {C}omputational
  {G}eometry}, volume 258 of {\em LIPIcs. Leibniz Int. Proc. Inform.}, pages
Art. No. 57, 20. Schloss Dagstuhl. Leibniz-Zent. Inform., Wadern, 2023.
\end{thebibliography}
\end{document}